\def\dOi{12(3:9)2016}
\theoremstyle{plain}
\crefname{thm}{Theorem}{Theorems}
\newtheorem{lemma}[thm]{Lemma}
\crefname{lemma}{Lemma}{Lemmas}
\newtheorem{corollary}[thm]{Corollary}
\crefname{corollary}{Corollary}{Corollaries}
\crefname{sch}{Scholium}{Scholia}
\theoremstyle{definition}
\newtheorem{defn}[thm]{Definition}
\crefname{defn}{Definition}{Definitions}
\newtheorem{rmk}[thm]{Remark}
\crefname{rmk}{Remark}{Remarks}
\newtheorem{rmks}[thm]{Remarks}
\crefname{rmks}{Remarks}{Remarks}
\newtheorem{ex}[thm]{Example}
\crefname{ex}{Example}{Examples}
\def\id{\mathsf{id}}
\def\refl{\mathsf{refl}}
\def\ap{\mathsf{ap}}
\def\mltt{\textsc{mltt}}
\def\zfc{\textsc{zfc}}
\def\uip{\textsc{uip}}
\def\oo{\ensuremath{\infty}}
\def\inl{\mathsf{inl}}
\def\inr{\mathsf{inr}}
\def\bN{\mathbb{N}}
\def\bone{\mathbf{1}}
\def\btwo{\mathbf{2}}
\def\type{\mathsf{Type}}
\def\Retr{\mathsf{Retr}}
\def\Split{\mathsf{Split}}
\def\QIdem{\mathsf{QIdem}}
\def\Idem{\mathsf{Idem}}
\def\PIdem{\mathsf{PIdem}}
\def\split{\mathsf{split}}
\def\uli{\mathsf{uli}}
\def\jdeq{\equiv}
\def\baut{B\mathsf{Aut}}
\def\aut{\mathsf{Aut}}
\newcommand{\defeq}{\vcentcolon\equiv}  % A judgmental equality currently being defined
\newcommand{\brck}[1]{\mathopen{}\left\Vert #1\right\Vert\mathclose{}}
\newcommand{\bproj}[1]{\mathopen{}\left|#1\right|\mathclose{}}
\newcommand{\ct}{%
  \mathchoice{\mathbin{\raisebox{0.5ex}{$\displaystyle\centerdot$}}}%
             {\mathbin{\raisebox{0.5ex}{$\centerdot$}}}%
             {\mathbin{\raisebox{0.25ex}{$\scriptstyle\,\centerdot\,$}}}%
             {\mathbin{\raisebox{0.1ex}{$\scriptscriptstyle\,\centerdot\,$}}}
}
\begin{document}

\title[Idempotents in intensional type theory]{Idempotents in intensional type theory} % [Idempotents in type theory]

\author[M.~Shulman]{Michael Shulman}
\address{University of San Diego\\5998 Alcala Park\\San Diego, CA 92110}
\email{shulman@sandiego.edu}
\thanks{This material is based on research sponsored by The United States Air Force Research Laboratory under agreement number FA9550-15-1-0053.  The U.S.~Government is authorized to reproduce and distribute reprints for Governmental purposes notwithstanding any copyright notation thereon.  The views and conclusions contained herein are those of the author and should not be interpreted as necessarily representing the official policies or endorsements, either expressed or implied, of the United States Air Force Research Laboratory, the U.S.~Government, or Carnegie Mellon University.}	

%% required for running head on odd and even pages, use suitable
%% abbreviations in case of long titles and many authors:

%% mandatory lists of keywords and classifications:
\keywords{Martin-Lof type theory, dependent type theory, idempotent, univalence axiom}

%\ACMCCS{\textbf{Theory of computation~Type theory}; \textit{Theory of computation~Constructive mathematics}; \textit{Software and its engineering~Functional languages}}

%% the abstract has to PRECEED the command \maketitle:
%% be sure not to issue the \maketitle command twice!

\begin{abstract}
  We study idempotents in intensional Martin-L\"of type theory, and in particular the question of when and whether they split.
  We show that in the presence of propositional truncation and Voevodsky's univalence axiom, there exist idempotents that do not split; thus in plain MLTT not all idempotents can be proven to split.
  On the other hand, assuming only function extensionality, an idempotent can be split if and only if its witness of idempotency satisfies one extra coherence condition.
  Both proofs are inspired by parallel results of Lurie in higher category theory, showing that ideas from higher category theory and homotopy theory can have applications even in ordinary MLTT.

  Finally, we show that although the witness of idempotency can be recovered from a splitting, the one extra coherence condition cannot in general; and we construct ``the type of fully coherent idempotents'', by splitting an idempotent on the type of partially coherent ones.
  Our results have been formally verified in the proof assistant Coq.
\end{abstract}

\maketitle

\section{Introduction}
\label{sec:intro}

In December 2014 Mart\'in Escard\'o asked me whether idempotents split in Martin-L\"of type theory (\mltt).
This paper is a long-winded answer.

Usually, an \emph{idempotent} means a function (necessarily an endofunction) that is equal to its composite with itself, $f\circ f= f$.
In \mltt, using the propositions-as-types methodology, we might naturally take this to mean a function $f:X\to X$, for some type $X$, together with a \emph{witness of idempotency} $I : \prod_{x:X} (f(f(x)) = f(x))$, where ``$=$'' denotes the identity type.
(If we assume function extensionality, as we often will, then to give $I$ is equivalent to giving $I' : f\circ f = f$.)

A \emph{splitting} of an idempotent $f$ on $X$ consists of functions $r:X\to A$ and $s:A\to X$ such that $r\circ s = \id_A$ and $s\circ r = f$.
In \zfc\ set theory, an idempotent always has a splitting with $A = \{ x\in X \mid f(x)=x \}$, where $s$ is the inclusion and $r$ is the corestriction of $f$.
This suggests that in \mltt\ we ought to consider $A = \sum_{x:X} (f(x)=x)$, with $s$ the first projection $s(x,p) \defeq x$ and $r$ defined by $r(x) \defeq (f(x),I(x))$.
However, as Mart\'in observed, this does not work in general:

\begin{ex}\label{thm:split-set}
  Let $X$ be any type and let $f=\id_X$ be its identity function, with $I$ the obvious witness defined by $I(x) \defeq \refl_x$.
  Then with the above-defined $A$, $s$, and $r$, we have $\prod_{a:A} (r(s(a))=a)$ if and only if
 % is logically equivalent to % (i.e.\ implies and is implied by)
 %  $\prod_{x:X} \prod_{p:x=x} ((x,\refl_x)=_A (x,p))$.
 %  However, this type is inhabited exactly when
 %  $\prod_{x:X} \prod_{p:x=x} (\refl_x = p)$
 %  is, and this is to say that
  $X$ satisfies Uniqueness of Identity Proofs (\uip).
  (We will prove this in \cref{sec:some-split}.)
  Since \mltt\ does not prove that all types satisfy \uip, neither can it prove that this construction always splits an idempotent.
\end{ex}

Now, if we were wondering whether \mltt\ proved some theorem and we had found that the obvious proof of some theorem used a classical axiom such as the law of excluded middle, then it would be natural to seek for counterexamples in nonclassical models (such as topological or realizability models) or disproofs from nonclassical axioms (such as strong Church's thesis or Brouwerian continuity principles).
Similarly, having found that the obvious way to split idempotents depends on \uip, it is natural to seek counterexamples in models that violate \uip\ or disproofs from axioms that contradict it.

This leads us into the recently discovered realm of Homotopy Type Theory and Univalent Foundations~\cite{apw:vvu-hott,awodey:tt-and-htpy,pw:hottvvuf,hottbook}.
Models which violate \uip, such as the Hofmann--Streicher groupoid model~\cite{hs:gpd-typethy} and Voevodsky's simplicial set model~\cite{klv:ssetmodel}, tend to be based on the idea that types are \emph{homotopy spaces} or \emph{\oo-groupoids}.
The principal known axiom that contradicts \uip\ --- Voevodsky's univalence axiom --- is also based on this idea.

This suggests that when seeking inspiration from classical mathematics, instead of \zfc\ set theory we should look to homotopy theory and \oo-groupoid theory.
In these fields, an important role is played by \emph{homotopy coherence}.
When a structure satisfies some property ``up to homotopy'', for many purposes it is not enough to simply have such a homotopy; often one requires this homotopy to satisfy some natural axiom(s) at the next dimension up --- and that only up to homotopy, a homotopy that in turn satisfies its own axioms up to even higher homotopy, and so on to infinity.

For instance, instead of a \emph{group} we may consider an \emph{\oo-group} (a.k.a.\ ``grouplike $A_\oo$-space'').
This is a space $X$ with a multiplication $m:X\times X\to X$ that is, among other things, associative but only up to homotopy: for any $x,y,z\in X$ instead of $m(m(x,y),z)$ and $m(x,m(y,z))$ being equal, they are connected by a path depending continuously on $x,y,z$.
These paths are then required to satisfy a further property: for any $x,y,z,w\in X$ there is a pentagon that can be built out of these paths, and we require that there be a continuous way to ``fill in'' that pentagon inside $X$.
From those filled pentagons one can then construct the boundary of a certain polyhedron, which we require to have a continuous filler, and so on.
If we stop at any finite stage, we obtain a much more poorly-behaved notion.

Now, under the homotopical interpretation of \mltt, a witness of idempotency $I : \prod_{x:X} (f(f(x)) = f(x))$ corresponds to a \emph{homotopy} from $f\circ f$ to $f$.
Thus, from a homotopy-theoretic point of view, it is natural to expect that $I$ itself would not be enough to obtain a well-behaved notion of ``idempotent'' (such as, for instance, one that can be split): we should ask it to satisfy a further property analogous to filling the pentagon, and that filler should itself satisfy a higher axiom up to homotopy, and so on.
In the context of \oo-categories, such a definition of \emph{fully-coherent idempotent} has been given by Lurie in~\cite[\S4.4.5]{lurie:higher-topoi}, along with proofs that every fully-coherent idempotent splits and every split idempotent is fully-coherent.

Unfortunately, there is a well-known problem with representing such fully-coherent structures in \mltt: on the face of it they seem to require a tower of infinitely many terms, each dependent on the previous ones, which is not something that can be defined as a single object in \mltt.
% It is true, though, that in some particular cases one can find finite representations for such apparently infinite structures.
% For instance, the \oo-categorical notion of \emph{equivalence} involves \emph{a priori} not just two functions $f:X\to Y$ and $g:Y\to X$ and homotopies $\epsilon : f\circ g \sim \id$ and $\eta : \id\sim g\circ f$ to identities, but higher coherence homotopies \emph{ad infinitum}.
% However, it is now well-known that there are plenty of ways to define ``equivalence'' using only the usual finitary operations in \mltt\ such that ``the type of equivalences'' yields the correct homotopical object (the first was due to Voevodsky); generally these involve one additional coherence datum beyond $(f,g,\epsilon,\eta)$.
% Moreover, given only $(f,g,\epsilon,\eta)$, it is possible to modify one of $\epsilon$ or $\eta$ in order that this additional coherence datum can be found.
% However, this nice situation with equivalences seems to be the exception rather than the rule as far as infinite structures go.
This is somewhat disheartening for the project of splitting idempotents.
However, it's important not to read more into the results of Lurie cited above than they say.
They do say that if a function $f$ can be written as $s\circ r$ where $r\circ s = \id$, then $f$ admits a ``coherent system of idempotence data''.
They \emph{don't} say that if $f$ is idempotent with a specified homotopy $I$, and $f$ splits, then $I$ must \emph{itself} admit an extension to a coherent system of idempotence data.
Therefore:

\begin{itemize}
\item Even though a split idempotent automatically gives rise to an infinite system of coherence data, it doesn't follow that in order to \emph{construct} a splitting we would necessarily need to \emph{give} an infinite system of coherence.
\item It's not too hard to give examples of homotopies $I$ that are not coherent, but it's rather less obvious how to give an example of an incoherent idempotent for which there doesn't exist some \emph{other} homotopy that is coherent.
\end{itemize}

\noindent Fortunately, Lurie has already addressed these questions as well (still in the \oo-categorical context).
In~\cite[Warning 1.2.4.8]{lurie:ha} he gave an example of an incoherent idempotent that does \emph{not} split, and in~\cite[Lemma 7.3.5.14]{lurie:ha} he showed that to construct a splitting, \emph{one} additional coherence datum suffices.
Inspired by these results, we will set out to transfer them to \mltt, as follows:
\begin{enumerate}
\item Assuming propositional truncation and the univalence axiom, we can adapt Lurie's counterexample to show that a single witness of idempotency $I : \prod_{x:X} (f(f(x)) = f(x))$ is insufficient to construct a splitting.
  In fact, our construction is slightly simpler than Lurie's, and involves an object familiar to constructive mathematicians: the Cantor space $2^{\bN}$.
\item However, under the weaker assumption of function extensionality, % (which follows from univalence and also from propositional truncation),
  we can adapt Lurie's construction to show that if we also have $J: \prod_{x:X} (\ap_f(I(x)) = I(f(x)))$, then we can construct a splitting.
  (Here $\ap_f$ denotes the action of $f$ on witnesses of equality; sometimes it is called $\mathsf{resp}$.)
  Our construction is actually the dual of Lurie's: we use a limit where he uses a colimit.
  A colimit would probably also work, but would require further assumptions on type theory for its construction and well-behavedness.
\end{enumerate}
Note that the latter positive result does \emph{not} require the univalence axiom.
So although inspired by higher category theory, we obtain a result that should be of interest even in pure intensional \mltt.

Based on these results, we propose that, as in higher category theory, the unadorned word \emph{idempotent} should not be used for the ``incoherent'' notion that includes only a single witness $I$.
Instead we will call the pair $(f,I)$ a \textbf{pre-idempotent}.

One might think that the triple $(f,I,J)$ ought to deserve the name ``idempotent'', since although it does not include all the higher coherence data, we have seen that it does suffice to construct a splitting. % analogous to the good definitions of ``equivalence'' mentioned above.
However, this is not the case.
It is true, in the \oo-categorical world, that a splitting induces a fully coherent idempotent in Lurie's sense, and hence so does a triple $(f,I,J)$; but nothing guarantees that the resulting coherent idempotent is an extension of $(f,I,J)$ itself.
In fact, we will show in \mltt\ that it is an extension of $(f,I)$, but not in general of $J$: assuming univalence and propositional truncation, there exist choices for $J$ that are not coherentifiable at all.
For these reasons, we will instead call a triple $(f,I,J)$ a \textbf{quasi-idempotent}; it is analogous to the ``incoherent, but coherentifiable, equivalences'' that
% quadruples $(f,g,\epsilon,\eta)$ from which one can construct a coherent equivalence, which
in~\cite{hottbook} are called \emph{quasi-inverses}.

At this point we may wonder whether there is \emph{any} way to define the word ``idempotent'' in \mltt\ in a way that will translate to the correct notion homotopically.
There is one answer that is somewhat ``cheap'': by~\cite[Corollary 4.4.5.14]{lurie:higher-topoi}, in an \oo-category the space of idempotents on an object $X$ is equivalent to the space of \emph{retractions} of $X$, meaning quadruples $(A,r,s,H)$ where $r:X\to A$ and $s:A\to X$ and $H$ is a homotopy $r\circ s \sim \id_A$.
The latter can be defined in \mltt\ (with universes) as
\[ \textstyle \Retr(X) \defeq \sum_{A:\type} \sum_{r:X\to A} \sum_{s:A\to X} \prod_{a:A} (r(s(a))=a), \]
and if we assume the univalence axiom, it will have the correct homotopy type.
Thus, we could define an \emph{idempotent} on $X$ to be an inhabitant of this type.

Of course, this would be rather unsatisfying: we expect an ``idempotent'' to consist of a map $f:X\to X$ equipped with some kind of structure, and we expect the construction of its splitting to be a nonvacuous operation.
Moreover, it has an actual technical drawback as well: since it involves a sum over a universe $\type$, it lives in a universe one higher than that of $X$.

Both of these problems can be solved with the following trick.
If we define the type of quasi-idempotents in the expected way:
\[ \textstyle\QIdem(X) \defeq \sum_{f:X\to X} \sum_{I : \prod_{x:X} (f(f(x)) = f(x))} \prod_{x:X} (\ap_f(I(x)) = I(f(x))) \]
then the above splitting construction yields a map
\[ \split : \QIdem(X) \to \Retr(X). \]
On the other hand, since every retraction induces a coherent idempotent, we have a map
\[ \uli : \Retr(X) \to \QIdem(X) \]
and these two maps can be shown to exhibit $\Retr(X)$ itself as a retract of $\QIdem(X)$.
Therefore, the composite $\uli \circ \split$ is a quasi-idempotent on $\QIdem(X)$, and if we construct its splitting as above, we obtain a type equivalent to $\Retr(X)$.
This splitting type is what we propose as the definition of \textbf{(fully coherent) idempotent}: it has the correct homotopy type; it is by construction an equipping of an endomap with data (indeed, infinitely many data, encoded internally by way of the natural numbers type); and it lies in the same universe as $X$.

The plan of the paper is as follows.
In \cref{sec:notation} we recall some notation and terminology from~\cite{hottbook}.
In \cref{sec:some-split} we ease into the study of idempotents by considering several hypotheses (due to Mart\'in Escard\'o) under which pre-idempotents can be split.
The next two sections contain the main results:
in \cref{sec:some-dont} we give our example of a pre-idempotent that admits no splitting (assuming propositional truncation and the univalence axiom), and
in \cref{sec:all-quasi-do} we construct a splitting of any quasi-idempotent (assuming function extensionality).

The remaining sections are concerned with the more technical coherence questions.
In \cref{sec:splitting-retraction} we show that $\split$ exhibits $\Retr(X)$ as a retract of $\QIdem(X)$.
In \cref{sec:splitting-not-equiv} we show that this retraction is not an equivalence, and conclude that although the underlying pre-idempotent of a quasi-idempotent can be recovered from its splitting, the coherence datum $J$ cannot in general be.
In \cref{sec:baut-baut-bool} we complete a proof from \cref{sec:splitting-not-equiv} that requires a lengthy analysis of some classifying spaces in type theory.
Finally, in \cref{sec:coherent-idempotents} we define the type of fully-coherent idempotents, and in \cref{sec:conclusions} we conclude with some remaining open problems.

Throughout, we will argue in the informal style of~\cite{hottbook}, and we will make use of the basic results from Chapters 1--4 thereof.
However, all the main results of this paper have also been formally verified in the proof assistant Coq, using the Homotopy Type Theory library~\cite{hottcoq}, and are available as part of that library.
% Because of this, we feel more justified in omitting some tedious details: in addition to the usual choice of trying to recreate them him- or herself, the reader has the options of simply trusting the formalization, or of reading or stepping through it to understand the calculations if desired.
As of the date of publication, the correspondence between sections of this paper and files in the library is:
\begin{center}
  \begin{tabular}{c|l}
    Section & Library File\\ \hline
    \cref{sec:some-split} & \texttt{Idempotents.v}\\
    \cref{sec:some-dont} & \texttt{Spaces/BAut/Cantor.v}\\
    \crefrange{sec:all-quasi-do}{sec:splitting-not-equiv} & \texttt{Idempotents.v}\\
    \cref{sec:baut-baut-bool} & \texttt{Spaces/BAut.v} and \texttt{Spaces/BAut/Bool.v}\\
    \cref{sec:coherent-idempotents} & \texttt{Idempotents.v}
  \end{tabular}
\end{center}
The \texttt{idempotents-paper} git tag records this version of the library.

\section{Some notation and terminology}
\label{sec:notation}

For the most part, we adopt the notation and terminology of~\cite{hottbook}.
We write $\prod_{x:A} B(x)$ and $\sum_{x:A}B(x)$ for dependent product and sum as usual in \mltt, with their non-dependent special cases $A\to B$ and $A\times B$.
We write the identity type of two elements $x,y:A$ as $x=_Ay$, or usually just $x=y$; its canonical elements are $\refl_x : x=x$.
We write $x\jdeq y$ for a judgmental equality, and $a\defeq b$ if $a$ is currently being defined to equal $b$.

A type $A$ is called a \textbf{mere proposition} if we have $\prod_{x,y:A} (x=y)$.
In other words, $\mathsf{isprop(A)} \coloneqq \prod_{x,y:A} (x=y)$.
It is said to \textbf{be a set}, or to \textbf{satisfy} \uip\ (Uniqueness of Identity Proofs), if $\prod_{x,y:A} \mathsf{isprop}(x=y)$, or equivalently $\prod_{x:X} \prod_{p:x=x} (p = \refl_x)$.

For functions $f,g:A\to B$, we write $f\sim g$ for the type $\prod_{x:A} (f(x)=g(x))$, and call it the type of \textbf{homotopies} from $f$ to $g$.
The \textbf{function extensionality} axiom, which we will almost always have available (either by explicit assumption, or as a consequence of some other assumption), says that this type is equivalent (see below) to the identity type $f=_{A\to B}g$.

For types $A$ and $B$, we write $A \simeq B$ for the type of \textbf{equivalences} from $A$ to $B$.
This is defined as $\sum_{f:A\to B} \mathsf{isequiv}(f)$, where $\mathsf{isequiv}(f)$ is any one of a number of well-behaved definitions, the first of which was due to Voevodsky; see~\cite[Chapter 4]{hottbook} for details.
The important properties are that $\mathsf{isequiv}(f)$ if and only if $\sum_{g:B\to A} (f\circ g\sim \id) \times (g\circ f \sim \id)$ (we generally use the ``if'' direction of this to construct equivalences), and that $\mathsf{isequiv}(f)$ is a mere proposition.
There is a canonical map $(A=B) \to (A\simeq B)$, and Voevodsky's \textbf{univalence axiom} says that this map is itself an equivalence.

The \textbf{propositional truncation} is, when assumed, a rule associating to every type $A$ a type $\brck{A}$ which is a mere proposition, and a map $\bproj{-} : A\to \brck A$, such that any function from $A$ to a mere proposition factors judgmentally through $\brck A$.
In other words, if $B$ is a mere proposition and $f:A\to B$, then there exists $g:\brck A \to B$ such that $f(a) \jdeq g(\bproj{a})$ for all $a:A$.
We sometimes pronounce $\brck{A}$ as ``\textbf{merely $A$}'', e.g.\ if we have an element of $\brck{A\simeq B}$ we say that $A$ and $B$ are merely equivalent.
Both univalence and propositional truncation imply function extensionality; the former is due to Voevodsky (see e.g.~\cite[\S4.9]{hottbook}) and the latter to~\cite[Corollary 8.3]{keca:anon}.

With homotopy-theoretic intuition in mind, elements of identity types (i.e.\ witnesses of equality) are sometimes called \textbf{paths}.
For $p:x=_A y$ and $q:y=_A z$, we have $p\ct q: x=_A z$ (a witness of transitivity) and $p^{-1} : y=_A x$ (a witness of symmetry), defined using the eliminator of the identity type.
Similarly, if $f:A\to B$, we have $\ap_f(p) : f(x) =_B f(y)$, and this operation is functorial (up to propositional equality) in two ways: $\ap_g \circ \ap_f = \ap_{g\circ f}$, and $\ap_f(p\ct q) = \ap_f(p) \ct \ap_f(q)$.

We will frequently use the fact that homotopies between functions satisfy a \emph{naturality} property~\cite[Lemma 2.4.3]{hottbook}.
Specifically, given $g,h:B\to C$ and $L:g\sim h$, for any $b_1,b_2:B$ and $p:b_1=b_2$, we have
\[ \ap_g(p) \ct L(b_2) = L(b_1) \ct \ap_h(p). \]

The other important facts we will use from~\cite{hottbook} are the theorems from its Chapter 2 that characterize the identity types of different type formers (sometimes requiring univalence and function extensionality).
For instance, the type $(a_1,b_1)=_{A\times B}(a_2,b_2)$ is equivalent to $(a_1=_A a_2) \times (b_1 =_B b_2)$, i.e.\ two ordered pairs are equal just when their components are.
In most cases these results are reasonably intuitive.

\section{Some pre-idempotents that split}
\label{sec:some-split}

As suggested in the introduction, we define:

\begin{defn}
  A \textbf{pre-idempotent} is an endofunction $f:X\to X$ equipped with a witness of idempotency $I : f\circ f \sim f$.
\end{defn}

\begin{defn}
  A \textbf{retract} of a type $X$ consists of a type $A$, functions $s:A\to X$ and $r:X\to A$, and a homotopy $H: r\circ s \sim \id_A$.
  A \textbf{splitting} of an endofunction $f:X\to X$ is a retraction $(A,r,s,H)$ together with a homotopy $K: s \circ r \sim f$.
\end{defn}

The following is fairly obvious.

\begin{lemma}\label{thm:split-pre}
  If $f$ has a splitting, then it is pre-idempotent.
\end{lemma}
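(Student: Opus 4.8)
The plan is to unwind the definitions and compute $f\circ f$ directly in terms of the splitting data. Suppose we are given a splitting of $f:X\to X$, that is, a retraction $(A,r,s,H)$ with $H:r\circ s\sim\id_A$ together with a homotopy $K:s\circ r\sim f$. We must produce a witness of idempotency $I:f\circ f\sim f$, i.e.\ a term of $\prod_{x:X}(f(f(x))=f(x))$.

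First I would observe that for each $x:X$ we have a chain of equalities: $f(f(x))$ is equal to $s(r(f(x)))$ by $K(f(x))^{-1}$; then $f(x)$ is equal to $s(r(x))$ by $K(x)$, so $r(f(x))$ relates to $r(s(r(x)))$ via $\ap_r(K(x))$, and applying $s$ we get $s(r(f(x)))=s(r(s(r(x))))$ by $\ap_s(\ap_r(K(x)))$; next $r(s(r(x)))=r(x)$ by $H(r(x))$, so $s(r(s(r(x))))=s(r(x))$ by $\ap_s(H(r(x)))$; and finally $s(r(x))=f(x)$ by $K(x)$. Composing these, I define
\[ I(x) \defeq K(f(x))^{-1} \ct \ap_s\bigl(\ap_r(K(x))\bigr) \ct \ap_s\bigl(H(r(x))\bigr) \ct K(x). \]
This gives the required homotopy $f\circ f\sim f$, so $(f,I)$ is a pre-idempotent.

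There is really no hard part here — as the paper says, the statement is ``fairly obvious'' — but the one point deserving a moment's care is making sure the endpoints of the composite actually match up, i.e.\ that the source of each path segment equals the target of the previous one, which is just a matter of bookkeeping with $\ap_s$, $\ap_r$, and the fact that $\ap$ distributes over composition and inverses. An alternative, even more economical route would be to note that any retraction $(A,r,s,H)$ makes $s\circ r$ pre-idempotent with witness $x\mapsto\ap_s(H(r(x)))$ (using that $s\circ r\circ s\circ r = s\circ(r\circ s)\circ r$ and $H$), and then transport this pre-idempotent structure along the homotopy $K:s\circ r\sim f$ using naturality of homotopies (\cite[Lemma 2.4.3]{hottbook}); this yields essentially the same $I$ up to the conjugation by $K$ visible above. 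Either way the verification is routine, so I would present the explicit formula for $I$ and leave the endpoint-matching to the reader.
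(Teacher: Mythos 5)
Your approach and the paper's are essentially the same: the core observation is that for a retraction $(A,r,s,H)$, the composite $s\circ r$ is pre-idempotent with witness $\ap_s(H(r(x)))$, and the general case follows because $f$ is homotopic to $s\circ r$ via $K$. The paper takes the route you call ``even more economical'' --- it says ``anything homotopic to a pre-idempotent is pre-idempotent'' and only writes out the $s\circ r$ case, leaving the $K$-conjugation implicit --- while you spell that conjugation out.

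However, the explicit formula you write has a directional slip in the middle term. Since $K(x) : s(r(x)) = f(x)$, the path $\ap_s(\ap_r(K(x)))$ has type $s(r(s(r(x)))) = s(r(f(x)))$, i.e.\ it points from $s(r(s(r(x))))$ \emph{to} $s(r(f(x)))$, not the other way. But $K(f(x))^{-1}$ ends at $s(r(f(x)))$, so to continue toward $s(r(s(r(x))))$ you need the \emph{inverse}; the composite that typechecks is
\[ I(x) \defeq K(f(x))^{-1} \ct \ap_s\bigl(\ap_r(K(x))\bigr)^{-1} \ct \ap_s\bigl(H(r(x))\bigr) \ct K(x). \]
(Equivalently, one can conjugate as the paper does later for $\Split(X,f,I)$: $\ap_f(K(x))^{-1} \ct K(s(r(x)))^{-1} \ct \ap_s(H(r(x))) \ct K(x)$; the two differ by a naturality square for $K$.) This is exactly the ``endpoint bookkeeping'' you flag as the one delicate point, so it's worth getting right rather than leaving to the reader. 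With that sign corrected, your proof is correct and matches the paper's.
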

\begin{proof}
  Clearly anything homotopic to a pre-idempotent is pre-idempotent, so it suffices to show that if we have a retraction $(A,r,s,H)$ then $s\circ r$ is pre-idempotent.
  In this case, for any $x:X$, we can define $I(x)\defeq \ap_s(H(r(x))) : s(r(s(r(x)))) = s(r(x))$.
\end{proof}

We can also show easily that splittings are essentially unique in at least a weak sense.

\begin{lemma}\label{thm:split-uniq}
  Suppose $f:X\to X$ has two splittings $(A,s,r,H,K)$ and $(A',s',r',H',K')$.
  Then $A\simeq A'$.
\end{lemma}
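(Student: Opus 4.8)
The plan is to produce an explicit equivalence $A\to A'$ and verify it with the quasi-inverse criterion recalled in \cref{sec:notation} (if $f$ has $g$ with $f\circ g\sim\id$ and $g\circ f\sim\id$, then $\mathsf{isequiv}(f)$). The natural candidate is $\phi\defeq r'\circ s:A\to A'$, with candidate inverse $\psi\defeq r\circ s':A'\to A$. So it suffices to construct homotopies $\psi\circ\phi\sim\id_A$ and $\phi\circ\psi\sim\id_{A'}$, and for this we only need to chain and whisker the homotopies that come packaged with the two splittings.

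For the first homotopy, start from $\psi\circ\phi\jdeq r\circ s'\circ r'\circ s$. Whiskering $K':s'\circ r'\sim f$ by $r$ on the left and $s$ on the right (i.e.\ the homotopy $a\mapsto\ap_r(K'(s(a)))$) gives $r\circ s'\circ r'\circ s\sim r\circ f\circ s$. Next, whiskering $K:s\circ r\sim f$ the same way and inverting it gives $r\circ f\circ s\sim r\circ s\circ r\circ s$. Finally $r\circ s\circ r\circ s\sim\id_A$ follows from $H:r\circ s\sim\id_A$ applied twice (e.g.\ whisker $H$ on the right by $r\circ s$, then concatenate with $H$). Concatenating these three homotopies yields $\psi\circ\phi\sim\id_A$. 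The second homotopy is obtained by the symmetric argument with the roles of the two splittings exchanged: $\phi\circ\psi\jdeq r'\circ s\circ r\circ s'\sim r'\circ f\circ s'$ by $K$, then $\sim r'\circ s'\circ r'\circ s'$ by the inverse of $K'$, then $\sim\id_{A'}$ by $H'$.

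Together these show $\phi$ has a quasi-inverse, hence $\mathsf{isequiv}(\phi)$, and therefore $A\simeq A'$. I do not expect any real obstacle here: the argument uses nothing beyond the formal manipulation of homotopies (whiskering by a function via $\ap$ and precomposition, concatenation, inversion) and the quasi-inverse criterion, and in particular it needs neither function extensionality nor univalence. The only point requiring care is bookkeeping — arranging the whiskerings so that each concatenation is between composable homotopies of the right types — but this is routine.
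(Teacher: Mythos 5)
Your proof is correct and takes essentially the same approach as the paper: define $\phi\defeq r'\circ s$ and $\psi\defeq r\circ s'$, pass through $f$ via $K$ and $K'^{-1}$ in the middle of each round-trip composite, and then apply $H$ (resp.\ $H'$) twice. The paper states the chain of homotopies more tersely, but the underlying argument is the same.
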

\begin{proof}
  We have two functions $r's:A\to A'$ and $rs':A'\to A$, and their composites are homotopic to identities:
  \[ r'srs' \sim r'fs' \sim r's'r's' \sim \id_{A'} \]
  and similarly $rs'r's \sim \id_A$.
\end{proof}

We expect that a split endofunction is not only pre-idempotent, but fully-coherently idempotent.
As remarked in the introduction, it is difficult to define fully-coherent idempotents in type theory, but we can at least define the next step of coherence.

\begin{defn}
  A \textbf{quasi-idempotent} is a pre-idempotent $(f,I)$ together with a witness of coherence $J: \prod_{x:X} (\ap_f(I(x)) = I(f(x)))$.
\end{defn}

\begin{lemma}\label{thm:split-quasi}
  If $f$ has a splitting, then it is quasi-idempotent.
\end{lemma}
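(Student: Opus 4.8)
The plan is to follow the pattern of \cref{thm:split-pre}. First I would observe that quasi-idempotency, like pre-idempotency, is preserved under homotopy of the underlying endofunction: given $(f,I,J)$ and $L:f\sim g$, one transports $I$ along $L$ in the evident way and then transports $J$, which is a routine — if somewhat tedious — manipulation of naturality squares. Granting this, it suffices to produce a coherence datum on $s\circ r$, where $(A,r,s,H)$ is the underlying retraction of the given splitting $(A,r,s,H,K)$; transporting it along $K:s\circ r\sim f$ then yields the datum for $f$. As in \cref{thm:split-pre} I would take $I(x)\defeq\ap_s(H(r(x)))$, so that the remaining task is to construct
\[ J(x) : \ap_{s\circ r}(I(x)) = I\bigl((s\circ r)(x)\bigr). \]

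The key ingredient is the standard fact that a homotopy $H:\phi\sim\id_A$ is natural ``at itself'': $\ap_\phi(H(a))=H(\phi(a))$ for every $a:A$. This drops out of the naturality property of homotopies recalled in \cref{sec:notation}, instantiated at the path $H(a):\phi(a)=a$: it gives $\ap_\phi(H(a))\ct H(a)=H(\phi(a))\ct\ap_{\id_A}(H(a))$, and since $\ap_{\id_A}(H(a))=H(a)$ one cancels $H(a)$ on the right. I would apply this with $\phi\defeq r\circ s$, and combine it with functoriality of $\ap$ under composition ($\ap_r(\ap_s(p))=\ap_{r\circ s}(p)$), to get
\[ \ap_r\bigl(\ap_s(H(a))\bigr) = H\bigl(r(s(a))\bigr) \]
for every $a:A$. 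Then, using $\ap_{s\circ r}(p)=\ap_s(\ap_r(p))$ and taking $a\defeq r(x)$,
\[ \ap_{s\circ r}(I(x)) = \ap_s\bigl(\ap_r(\ap_s(H(r(x))))\bigr) = \ap_s\bigl(H(r(s(r(x))))\bigr) = I\bigl((s\circ r)(x)\bigr), \]
whose composite is the required $J(x)$.

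I expect the main obstacle to be purely one of bookkeeping rather than of ideas. The equations $\ap_r(\ap_s(p))=\ap_{r\circ s}(p)$ and $\ap_{\id_A}(p)=p$ hold only up to a path, so a careful argument must carry these coherence paths through the computation and verify that they cancel; and the transport of the pair $(I,J)$ along $K$ back to $f$ itself unfolds into several further naturality identities. None of this is conceptually hard, and it is exactly what the accompanying Coq development checks mechanically.
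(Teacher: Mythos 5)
Your proof is correct and follows essentially the same route as the paper: reduce to the retraction case with $I(x)\defeq\ap_s(H(r(x)))$, peel off $\ap_s$ to reduce to showing $\ap_{r\circ s}(H(a))=H(r(s(a)))$, and then observe that this follows from the naturality of $H$ applied at the path $H(a)$ after cancelling $H(a)$. The only cosmetic difference is that you package this last step as a reusable ``$H$ is natural at itself'' lemma, whereas the paper carries out the naturality-and-cancellation argument inline.
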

\begin{proof}
  As in \cref{thm:split-pre}, it suffices to show that for any retraction $(A,s,r,H)$, $s\circ r$ is quasi-idempotent.
  For this case, in \cref{thm:split-pre} we defined $I(x)\defeq \ap_s(H(r(x))) : s(r(s(r(x)))) = s(r(x))$.
  Thus, for $x:X$ the desired type of $J(x)$ is
  \[ \ap_f(\ap_s(H(r(x)))) = \ap_s(H(r(f(x)))). \]
  This is equivalent to
  \[ \ap_s(\ap_{r\circ s}(H(r(x)))) = \ap_s(H(r(s(r(x))))). \]
  Peeling off an $\ap_s$, and letting $a\defeq r(x)$, it will suffice to show that for any $a:A$ we have
  \[ \ap_{r\circ s}(H(a)) = H(r(s(a))). \]

  At first this seems like a nontrivial property of $H$.
  However, in fact it is automatic.
  For by naturality of the homotopy $H$ applied at the equality $H(a)$, we have
  \[ \ap_{r\circ s}(H(a)) \ct H(a) = H(r(s(a))) \ct H(a). \]
  Now we can cancel $H(a)$ from both sides to obtain the desired result.
\end{proof}

We now give a few conditions under which pre-idempotents can be split.
Our first observation is:

\begin{thm}\label{thm:set-split}
  If $X$ is a set, then any pre-idempotent on $X$ has a splitting.
\end{thm}
\begin{proof}
  Define $A\defeq \sum_{x:X} (f(x)=x)$, and let $s$ and $r$ be defined by $s(x,p) = x$ and $r(x) = (f(x),I(x))$.
  Now for $x:X$, we have $s(r(x)) \jdeq f(x)$ by definition; hence we can take $K(x) \defeq \refl_{f(x)}$.
  On the other hand, for $(x,p):A$ we have $r(s(x,p)) \jdeq (f(x),I(x))$; thus $H(x,p)$ must inhabit $((f(x),I(x)) = (x,p))$.
  By~\cite[Theorems 2.7.2 and 2.11.3]{hottbook}, to give an element of this type we must give $q:f(x)=x$ and $r:\ap_f(q)^{-1} \ct I(x) \ct q = p$.
  But we can take $q \defeq p$, and obtain $r$ from the assumption that $X$ is a set.
\end{proof}

Now here is our elaboration of \cref{thm:split-set}, showing that this construction cannot always work.

\begin{ex}[Escard\'o]\label{thm:split-set2}
  Let $X$ be any type and $f\defeq \id_X$, with $I(x) \defeq \refl_x$.
  Then with the above-defined $A$, $s$, and $r$, the desired type of $H(x,p)$ is equivalent to $q^{-1} \ct \refl_x \ct q = p$, and hence to $\refl_x = p$.
  If this is true for all $x:X$ and all $p:x=x$, then $X$ satisfies \uip.
\end{ex}

Escard\'o has also observed a couple of other situations in which pre-idempotents can be split.
For the first, recall from~\cite{keca:anon} that a function $f:X\to Y$ is \textbf{weakly constant} if we have a witness $\prod_{x,y:X} (f(x)=f(y))$.

\begin{thm}[Escard\'o]
  If a pre-idempotent is weakly constant, then it has a splitting.
\end{thm}
\begin{proof}
  We use the same construction as in \cref{thm:set-split}; by following the proof thereof, it remains only to construct $H$.
  However, by~\cite[Lemma 4.1]{keca:anon}\footnote{Also formalized in~\cite{hottcoq} as \verb|ishprop_fix_wconst| in \texttt{Constant.v}.}, when $f$ is weakly constant, our type $A\defeq \sum_{x:X} (f(x)=x)$ (there called $\mathsf{fix}(f)$) is a mere proposition, i.e.\ we have $\prod_{a,b:A} (a=b)$.
  This makes the construction of $H$ trivial.
\end{proof}

Conversely, it is easy to see that if an endofunction splits through a mere proposition, then it is weakly constant.

For the second, recall from~\cite{keca:anon} that a type admits a weakly constant endofunction if and only if there is some mere proposition $P$ with functions $A\to P$ and $P\to A$ (since any function that factors through a mere proposition is weakly constant, while by~\cite[Lemma 4.1]{keca:anon} if $f$ is weakly constant we can take $P\defeq \mathsf{fix}(f)$).
Moreover, if we have propositional truncation, this condition is equivalent to the existence of a map $\brck{A} \to A$, a property which one may call having \textbf{split support}.
Finally, recall from~\cite[Lemma 7.6.2]{hottbook} that a function $f:A\to B$ is said to be an \textbf{embedding} if for all $b:B$ the type $\sum_{a:A} (f(a)=b)$ is a mere proposition.

The following theorem is our first example of a definable splitting in which the splitting type $A$ is \emph{not} the obvious $\sum_{x:X} (f(x)=x)$.

\begin{thm}[Escard\'o]\label{thm:split-splitsupp}
  An endofunction $f$ has a splitting in which the section $s$ is an embedding if and only if it is pre-idempotent and the type $f(x)=x$ admits a weakly constant endofunction for all $x$.
\end{thm}

(It is arguably more natural to formulate this theorem in terms of split support.
The advantage of using weakly constant endofunctions instead is that it makes sense even in the absence of propositional truncation.)

\begin{proof}
  First suppose $f$ is pre-idempotent and each $f(x)=x$ has a weakly constant endofunction.
  Thus, for each $x$ there is a mere proposition $P_x$ and maps $u_x: (f(x)=x) \to P_x$ and $v_x : P_x \to (f(x)=x)$.
  (If we have propositional truncation, we can take $P_x \defeq \brck{f(x)=x}$, and the reader may find it easier to think about this case.)
  We define $A\defeq \sum_{x:X} P_x$, with $s(x,p) \defeq x$ and $r(x) \defeq (f(x),u_{f(x)}(I(x)))$, while $K(x) \defeq \refl_{f(x)}$ as before.
  For $H$, given $(x,p):A$ where $x:X$ and $p: P_x$, we must show that $(f(x),u_{f(x)}(I(x))) = (x,p)$, which as before amounts to giving $q:f(x)=x$ and an equality $q_*(u_{f(x)}(I(x))) = p$, where $q_* : P_{f(x)} \to P_x$ denotes transport along $q$.
  But we can define $q \defeq v_x(p)$, while the remaining equality is trivial since $P_x$ is a mere proposition.

  Now conversely, suppose $f$ has a splitting in which $s$ is an embedding; it remains to show that $f(x)=x$ has a weakly constant endofunction for all $x:X$.
  Since $s$ is an embedding, the type $\sum_{a:A} (s(a)=x)$ is a mere proposition.
  Thus, it will suffice to construct maps in both directions relating this type to $f(x)=x$, or equivalently to the type $s(r(x))=x$.
  In one direction, given $p:s(r(x))=x$, we have $(r(x),p):\sum_{a:A} (s(a)=x)$.
  In the other, given $a:A$ and $p:s(a)=x$, we have
  $ s(r(x)) = s(r(s(a))) = s(a) = x$.
\end{proof}

\begin{rmks}\ 
  \begin{enumerate}
  \item \cref{thm:set-split} is a special case of \cref{thm:split-splitsupp}: if $X$ is a set, then each type $f(x)=x$ is a mere proposition, hence trivially has a weakly constant endofunction.
    Moreover, by~\cite[Theorem 3.10]{keca:anon}, if \emph{every} type $x=_X y$ has a weakly constant endofunction, then $X$ is necessarily a set.
    However, there do exist functions on non-sets to which \cref{thm:split-splitsupp} applies; a trivial example is $X \defeq Y+\bone$ with $f(x)\defeq \inr(\mathsf{tt})$.
  \item On the other hand, there can exist retractions for which the section is not an embedding.
    For instance, any map $\bone \to X$ exhibits $\bone$ as a retract of $X$, but to say that all such maps are embeddings is just to say that $X$ is a set.
    Thus, \cref{thm:split-splitsupp} emphasizes another way in which idempotents in homotopy theory differ from idempotents in set theory, since in set theory the splitting of an idempotent \emph{always} injects into the original set.
  \item When the conditions of \cref{thm:split-splitsupp} fail, it doesn't generally mean there is any particular $x:X$ such that $f(x)=x$ does not have a weakly constant endofunction.
    It only means we cannot assert that ``$f(x)=x$ has a weakly constant endofunction for all $x:X$'', since such an assertion would imply an impossible ``naturality'' of the endofunctions.
  \item Perhaps surprisingly, none of the results in this section require even function extensionality.
  \end{enumerate}
\end{rmks}

\section{A pre-idempotent that doesn't split}
\label{sec:some-dont}

As mentioned in the introduction, it's easy to give examples of idempotence witnesses $I:f\circ f \sim f$ that cannot be extended to a coherent system of idempotence data.

\begin{ex}
  Let $X$ be any type with a point $x_0:X$ for which there exists a nontrivial $p:x_0=x_0$, i.e.\ such that $p\neq \refl_{x_0}$.
  (For instance, in the presence of the univalence axiom, we could let $X$ be the universe, with $x_0$ a type admitting a nonidentity self-equivalence, such as $\btwo$.)
  Define $f:X\to X$ by $f(x)\defeq x_0$ for all $x$, and let $I(x) \defeq p$ for all $x$.
  Then $(f,I)$ is a pre-idempotent.
  But $\ap_f(q) = \refl_{x_0}$ for all $q$, so the second-level coherence type $\prod_{x:X} (\ap_f(I(x)) = I(f(x)))$ is equivalent to $\prod_{x:X} (\refl_{x_0} = p)$, which by assumption is not inhabited.
\end{ex}

However, it's less clear how to exhibit a pre-idempotent $(f,I)$ for which there cannot exist any \emph{other} witness $I'$ that \emph{is} coherent.
For instance, in the above example, we could simply have taken $I'(x) \defeq \refl_{x_0}$.

We will describe an example inspired by that of~\cite[Warning 1.2.4.8]{lurie:ha}, but not quite identical to it.
In Lurie's example, the space $X$ is the classifying space of the group of endpoint-preserving self-homeomorphisms of the unit interval $[0,1]$.
However, the essential feature of this choice, for the purposes of the example, is that two such homeomorphisms can be shrunk by a factor of 2 and glued together to form a new such.
This is reminiscent of Freyd's universal characterization of $[0,1]$ (see e.g.~\cite[D4.7.17]{ptj:elephant2}), but in fact it can be completely divorced from the topology.
Thus, we will instead use a type familiar to constructive mathematicians: the Cantor space.

\begin{defn}
  The \textbf{Cantor space} is the type $C \defeq (\bN \to\btwo)$.
\end{defn}

The essential property of $C$, for our purposes, is the following.

\begin{lemma}\label{thm:C=C+C}
  Assuming function extensionality, $C\simeq (C+C)$.
\end{lemma}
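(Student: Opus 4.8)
The plan is to factor the desired equivalence through $\btwo\times C$. First I would establish $C\simeq \btwo\times C$ by the familiar ``head and tail'' decomposition of an infinite binary sequence. The forward map sends $f:C$ to the pair $(f(0),\,\lambda n.\,f(n+1))$. For the inverse, given $(b,g):\btwo\times C$, I would use the recursor of $\bN$ to define a sequence $h$ with $h(0)\defeq b$ and $h(n+1)\defeq g(n)$. One round-trip is essentially definitional: starting from $(b,g)$ we recover $(h(0),\,\lambda n.\,h(n+1))$, which equals $(b,g)$ by the $\eta$-rule for functions (or, failing that, by function extensionality). The other round-trip is where function extensionality is genuinely used: starting from $f$ we must show that the reconstructed sequence equals $f$, and function extensionality reduces this to proving $h(n)=f(n)$ for all $n:\bN$, which follows by induction --- both the base case $h(0)\jdeq f(0)$ and the step $h(n+1)\jdeq f(n+1)$ hold definitionally.

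Next I would observe that $\btwo\times C\simeq C+C$. This is just distributivity of products over coproducts together with the unit law, using the relevant equivalences from Chapter~2 of~\cite{hottbook}:
\[ \btwo\times C \;\simeq\; (\bone+\bone)\times C \;\simeq\; (\bone\times C)+(\bone\times C) \;\simeq\; C+C. \]
Concretely, the composite equivalence $C+C\to C$ sends $\inl(g)$ to the sequence with head $0_\btwo$ and tail $g$, and $\inr(g)$ to the sequence with head $1_\btwo$ and tail $g$; its inverse case-splits on the head $f(0)$ of a given sequence $f$.

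Finally I would compose the two equivalences. There is no serious obstacle here; the only point worth flagging is precisely the one that explains the hypothesis of the lemma --- function extensionality is needed to recognize that an infinite sequence is determined by its head and tail, namely to conclude the equality between the two functions $h$ and $f$ out of $\bN$. Everything else is either definitional or a direct appeal to the basic equivalences for coproducts, products, and the unit type.
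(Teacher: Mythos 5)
Your proof is correct and is essentially the same construction as the paper's: the paper defines the equivalence $C\simeq C+C$ directly by case-splitting on $c(0)$ and shifting/unshifting the tail, which is exactly the composite of your two maps through $\btwo\times C$, and it likewise relies on function extensionality (plus $\bN$-induction) for the round trip. The only difference is that you make the intermediate type $\btwo\times C$ explicit, which is a matter of presentation rather than substance.
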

\begin{proof}
  From left to right, given $c:\bN\to\btwo$ we define $c'(n) \defeq c(n+1)$, and split into cases based on whether $c(0)$ is $0$ or $1$.
  In the former case, we send $c$ to $\inl(c')$, and in the second case we send it to $\inr(c')$.

  From right to left, we send $\inl(c)$ to $c_0$, where $c_0(0)\defeq 0$ and $c_0(n+1) \defeq c(n)$; and similarly we send $\inr(c)$ to $c_1$ where $c_1(0)\defeq 1$ and $c_0(n+1) \defeq c(n)$.
  It is easy to check that these are inverse equivalences.
\end{proof}

We now consider the ``classifying space of the automorphism group of $C$'', starting by defining it.

\begin{asm}
  For the rest of this section we assume both univalence and propositional truncation.
\end{asm}

From this assumption we also get function extensionality.
In fact, as noted earlier, both univalence and propositional truncation separately imply it.

\begin{defn}
  For any $Y:\type$, we define $\baut(Y)\defeq \sum_{Z:\type}\brck{Z=Y}$.
\end{defn}

Because $\brck{Z=Y}$ is a mere proposition, if we have $(Z,e)$ and $(Z',e')$ in $\baut(Y)$, the type $(Z,e)=(Z',e')$ is equivalent to $Z=Z'$ and hence (by univalence) to $Z\simeq Z'$.
This justifies abusing the notation by identifying an element of $\baut(Y)$ with its first component, which is a type $Z$ that comes equipped with an element of $\brck{Z=Y}$.

In particular, we have the canonical element $(Y,\bproj{\refl_Y}):\baut(Y)$, and the type $(Y,\bproj{\refl_Y})=(Y,\bproj{\refl_Y})$ (the ``loop space'' of $\baut(Y)$ at this ``basepoint'') is equivalent to $Y\simeq Y$, the type of automorphisms of $Y$.
It is in this sense that $\baut(Y)$ is a classifying space for the automorphism group of $Y$.
(The type $\baut(Y)$ is also a classifying space in another sense: to give a map $A\to \baut(Y)$ is equivalent to giving a map $p:B\to A$ such that every fiber is merely equivalent to $Y$, i.e.\ for each $a:A$ we have $\brck{Y = \sum_{b:B} (p(b)=a)}$.)

Our incoherent pre-idempotent will live on the type $X \defeq \baut(C)$, where $C$ is the Cantor space.
Thus, we must next construct a particular map $f:X\to X$.
If we translated Lurie's construction directly, we would do this by first defining an automorphism $F$ of the group $\aut(C)$, by sending an automorphism $h$ to the automorphism $F(h)$ defined as the composite
\[ C \simeq C+C \xrightarrow{h + \id} C+C \simeq C\]
% \[ F(h)(c)(n) \defeq
% \begin{cases}
%   0 &\quad c(0)=0 \text{ and } n=0\\
%   h(\lambda n. c(n+1))(k) &\quad c(0)=0 \text{ and } n=k+1\\
%   c(n) &\quad c(0)=1
% \end{cases}
% \]
where the equivalences come from \cref{thm:C=C+C}.
Then we would use the fact that an automorphism of a group induces an automorphism of its classifying space to obtain from $F$ an automorphism of $\baut(Y)$.

However, although this fact is standard in homotopy theory, it is not obvious from our definition of $\baut(Y)$ in type theory that any automorphism of the group $\aut(Y)$ induces an automorphism of the type $\baut(Y)$.
It can be deduced from the alternative construction of classifying spaces in~\cite{lf:emspaces}; but fortunately in our case there is a better approach.

The univalence axiom has allowed us to define $\baut(Y)$ in such a way that its elements literally \emph{are} types that are merely equivalent to $Y$ (or more precisely, equipped with such a mere equivalence).
Thus, we can define $f$ to act directly on such types, rather than indirectly on their automorphisms.
Specifically, if we define $f(Z) \defeq Z+C$, then the \emph{induced} action on automorphisms will automatically have the intended effect as shown above.

All we have to do is verify that this definition indeed defines an endomorphism of $\baut(C)$, i.e.\ that if $\brck{Z=C}$ then also $\brck{Z+C=C}$.
By the induction principle of propositional truncation, it suffices to prove that if $Z=C$ then $Z+C=C$, and by univalence it suffices to prove that if $e:Z\simeq C$ then $Z+C \simeq C$.
But for this we have the composite
\[ Z+C \;\simeq\; C+C \;\simeq\; C \]
where the first equivalence is $e+\id$ and the second is \cref{thm:C=C+C}.

Next, we have to construct a witness of pre-idempotency for $f$, i.e.\ we must show that for any $Z:\baut(C)$ we have $(Z+C)+C = Z+C$.
We again apply univalence and then use the following composite equivalence:
\begin{equation}
  (Z+C)+C \;\simeq\; Z+(C+C) \;\simeq\; Z+C\label{eq:bautC-preidem}
\end{equation}
consisting of the associativity of coproducts together with \cref{thm:C=C+C}.

We are now ready for the central theorem of this section.

\begin{thm}\label{thm:pre-notsplit}
  There exists a pre-idempotent on $X\defeq \baut(C)$ that does not split.
\end{thm}
\begin{proof}
  The construction of the pre-idempotent is as above; it remains to show that $f$ does not split.
  Lurie's argument is that if it did, then the colimit $X \xrightarrow{f} X \xrightarrow{f} X \xrightarrow{f} \cdots$ would be its splitting, and hence the map from $X$ to that colimit would be surjective on fundamental groups; whereas $f$ itself is certainly not surjective on fundamental groups and so this is impossible.
  In type theory, colimits are difficult to work with, though homotopy type theory with higher inductive types makes them more tractable than otherwise.
  However, we can fortunately again give a more direct argument, based on our concrete construction of $\baut(C)$.
  
  Suppose for contradiction that $f$ is split.
  Then by \cref{thm:split-quasi} it is quasi-idempotent, with witnesses $I$ and $J$.
  For any $Z:\baut(C)$, we have
  \[J(Z) : \ap_f(I(Z)) =_{f(f(f(Z))) = f(f(Z))} I(f(Z)).\]
  Since $f(f(f(Z))) = Z+C+C+C$ and $f(f(Z))=Z+C+C$, by univalence and function extensionality, $J(Z)$ may equivalently be regarded as a homotopy between two specified equivalences $(Z+C+C+C) \to (Z+C+C)$.

  The first of these equivalences (corresponding to $\ap_f(I(Z))$) decomposes the domain and codomain as $(Z+C+C)+C$ and $(Z+C)+C$, mapping the first summand $Z+C+C$ to $Z+C$ by $I(Z)$ and the second summand $C$ to $C$ by the identity.
  As for the second equivalence, if $I$ were the witness~\eqref{eq:bautC-preidem} that we gave above, then the equivalence $Z+C+C+C \to Z+C+C$ corresponding to $I(f(Z))$ would bracket the domain instead as $(Z+C)+(C+C)$ and map it to $(Z+C)+C$ by the identity on $Z+C$ and the ``fold'' equivalence $C+C\to C$.
  Thus, the two could not possibly be homotopic, since they would send the third summand of $Z+C+C+C$ to different summands of the codomain.

  This argument doesn't quite work as stated, since $I$ might \emph{not} be the same proof of idempotency that we gave above.
  (Remember that we are supposing only that $f$ is split, hence quasi-idempotent, in \emph{some} way, since the claim to prove is that $f$ is not split, which makes no reference to any previously existing witness of pre-idempotence.)
  However, whatever $I$ is, it is defined ``for all $Z:\baut(C)$''.
  This implies that the induced equivalences $Z+C+C\to Z+C$ must be \emph{natural} with respect to equivalences between $Z$s (this is not exactly the same sort of naturality that we mentioned in \cref{sec:notation}, but it follows similarly).
  In other words, for any $Z,Z':\baut(C)$ and equivalence $e:Z\simeq Z'$, the following square must commute (up to homotopy):
  \begin{equation}\label{eq:bautC-qidem-nat}
  \vcenter{\xymatrix@C=3pc{
      Z+C+C\ar[r]^-{I(Z)}\ar[d]_{e+\id+\id} &
      Z+C\ar[d]^{e+\id}\\
      Z'+C+C\ar[r]_-{I(Z')} &
      Z'+C
      }}
  \end{equation}
  In particular, we can take $Z$ and $Z'$ to be both $f(C)$, i.e.\ $C+C$, and let $e$ be the ``flip'' automorphism $C+C \simeq C+C$ that interchanges the summands.
  Then the horizontal maps in~\eqref{eq:bautC-qidem-nat} are both the equivalence $(C+C)+C+C \to (C+C)+C$ induced by $I(f(C))$, and~\eqref{eq:bautC-qidem-nat} itself becomes
  \begin{equation}\label{eq:bautC-qidem-nat}
  \vcenter{\xymatrix@C=3pc{
      C+C+C+C\ar[r]^-{I(f(C))}\ar[d]_{e+\id+\id} &
      C+C+C\ar[d]^{e+\id}\\
      C+C+C+C\ar[r]_-{I(f(C))} &
      C+C+C
      }}
  \end{equation}
  Consider elements of the third and fourth summands in the upper-left corner, which are fixed by $e+\id+\id$ on the left.
  Since the two horizontal maps are both $I(f(C))$, it must be that the image of any such element under $I(f(C))$ is fixed by $e+\id$ on the right.
  But the only elements of $C+C+C$ fixed by $e+\id$ are those in the third summand.
  Thus, $I(f(C))$ must map the last two summands in the domain to the last one summand in the codomain, just as our original witness of pre-idempotency did, so our previous argument to a contradiction kicks in.
  % Further details can be found in the formalization.
\end{proof}

Note that we actually showed a bit more: there is a pre-idempotent on $\baut(C)$ that is not even quasi-idempotent.
Because univalence and propositional truncation are consistent assumptions, we conclude:

\begin{corollary}
  It is impossible to prove in \mltt\ that all pre-idempotents split, or even that all pre-idempotents are quasi-idempotent.\qed
\end{corollary}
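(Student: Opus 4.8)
The plan is to obtain this as an immediate consequence of \cref{thm:pre-notsplit} --- together with the strengthening recorded just after its proof, that the pre-idempotent constructed there is not even quasi-idempotent --- by a routine relative-consistency argument; there is no new mathematics to do here, only careful bookkeeping about which theory proves what.

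First I would read the two assertions in question as genuine \mltt\ propositions. ``Every pre-idempotent splits'' is the universe-polymorphic statement $\prod_{X:\type}\prod_{f:X\to X}\prod_{I:f\circ f\sim f}\Split(f)$, where $\Split(f)\defeq\sum_{A:\type}\sum_{r:X\to A}\sum_{s:A\to X}(r\circ s\sim\id_A)\times(s\circ r\sim f)$; note that $\Split(f)$ lives one universe up, which is harmless since \mltt\ has a universe hierarchy. Likewise, ``every pre-idempotent is quasi-idempotent'' is the analogous $\prod$-statement with $\Split(f)$ replaced by the type $\prod_{x:X}(\ap_f(I(x))=I(f(x)))$ of coherence witnesses $J$. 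Now \cref{thm:pre-notsplit} and the note following it exhibit, in the theory $T$ obtained from \mltt\ by adding univalence and propositional truncation, terms inhabiting the \emph{negations} of both of these statements: the pre-idempotent on $\baut(C)$ is an explicit counterexample to each universal claim.

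Next I would invoke the consistency of $T$: it is a consistent extension of \mltt\ --- univalence holds in Voevodsky's simplicial set model~\cite{klv:ssetmodel}, which likewise supports the propositional truncation --- so $T$ does not prove $\bot$. Finally I combine the two observations: if plain \mltt\ proved either of the universal statements, then a fortiori so would $T$, since $T$ is just \mltt\ with extra axioms; but $T$ already refutes both statements, so $T$ would be inconsistent, a contradiction. Hence \mltt\ proves neither, which is exactly the claim. I anticipate no real obstacle beyond checking that the simplicial model genuinely validates the negations, and this is automatic: the model validates \cref{thm:pre-notsplit}, and that theorem, read internally, already asserts the existence of a pre-idempotent that does not split and is not even quasi-idempotent.
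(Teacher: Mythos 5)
Your argument is exactly the paper's: the paper justifies the corollary with the single remark ``Because univalence and propositional truncation are consistent assumptions, we conclude\ldots'', which is precisely the relative-consistency step you spell out (Theorem~\ref{thm:pre-notsplit} plus the remark after it gives a refutation in the extended theory, and the simplicial set model witnesses that theory's consistency). Your more detailed bookkeeping is correct and adds nothing that needs changing.
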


\section{All quasi-idempotents split}
\label{sec:all-quasi-do}

We now show, in contrast to \cref{thm:pre-notsplit}, that any \emph{quasi}-idempotent \emph{can} be split, assuming nothing more than function extensionality.
There is an obvious naive thing to try: just as $J$ extends $I$ with an additional coherence, we might try to extend the type $\sum_{x:X}(f(x)=x)$ that worked sometimes in \cref{sec:some-split} with an additional coherence, defining
\begin{equation}
  \textstyle \sum_{x:X} \sum_{p:f(x)=x} (\ap_f(p) = I(x)).\label{eq:qidem-try}
\end{equation}
However, Kraus has shown that this does not work in general.

\begin{ex}[Kraus]
  Let $X$ be a type with an element $x_0:X$, and define $f:X\to X$ by $f(x)\defeq x_0$ for all $x:X$.
  Then $f$ is quasi-idempotent with $I(x)\defeq \refl_{x_0}$ and $J(x) \defeq \refl_{\refl_{x_0}}$ for all $x$.
  However, the type of~\eqref{eq:qidem-try} in this case becomes
  \[ \textstyle\sum_{x:X} \sum_{p:x_0=x} (\refl_{x_0} = \refl_{x_0}). \]
  This is equivalent to
  \[ \textstyle \sum_{q:\sum_{x:X} (x_0=x)} (\refl_{x_0} = \refl_{x_0}) \]
  and thence to simply $\refl_{x_0} = \refl_{x_0}$, since the type ${\sum_{x:X} (x_0=x)}$ is contractible.

  On the other hand, $f$ has an evident splitting with $A\defeq \bone$, where $s$ picks out the point $x_0$.
  Thus, if~\eqref{eq:qidem-try} were also a splitting of it, then by \cref{thm:split-uniq} it would be equivalent to $\bone$, i.e.\ contractible.

  However, assuming univalence, there are pointed types $(X,x_0)$ for which $\refl_{x_0} = \refl_{x_0}$ is not contractible.
  For instance, we can take $X$ to be the universe $\type$, with $x_0\defeq \baut(\btwo)$.
  In this case, a nontrivial element of $\refl_{x_0} = \refl_{x_0}$ is constructed in the proof of~\cite[Theorem 4.1.3]{hottbook} (we will give a slightly different construction of the same element in \cref{thm:center-bautbool}).
  Thus, \mltt\ cannot prove that~\eqref{eq:qidem-try} always splits a quasi-idempotent.
\end{ex}

Thus thwarted in our na\"\i{}ve attempts, we turn again to \oo-category theory.
The proof in~\cite[Lemma 7.3.5.14]{lurie:ha} shows that one extra coherence datum suffices to construct a splitting as the colimit of the infinite sequence
\[ X \xrightarrow{f} X \xrightarrow{f}X \xrightarrow{f} \cdots \]
As observed before, colimits are difficult to handle in type theory.
Fortunately, idempotents are completely self-dual, so we might just as well consider the limit of the infinite sequence
\[ \cdots \xrightarrow{f} X \xrightarrow{f}X \xrightarrow{f} X. \]
This is easy to define in type theory: it is $\sum_{a:\bN\to X} \prod_{n:\bN} (f(a_{n+1}) = a_n)$.
Here we see the need for function extensionality: this type involves functions, and we need to construct equalities in it to exhibit it as a retract of $X$.
For reference, we record exactly how to construct equalities in this type.

\begin{lemma}\label{thm:path-split-idem}
  Given $(a,\alpha)$ and $(b,\beta)$ in $\sum_{a:\bN\to X} \prod_{n:\bN} (f(a_{n+1}) = a_n)$, to show that they are equal (assuming function extensionality) it is necessary and sufficent to
  \begin{enumerate}
  \item Construct for each $n:\bN$ an equality $\xi_n : a_n = b_n$, and
  \item Show that for each $n:\bN$ the following diagram of equalities commutes:
    \begin{equation}
      \vcenter{\xymatrix{
          f(a_{n+1})\ar[r]^-{\alpha_n}\ar[d]_{\ap_f(\xi_{n+1})} &
          a_n \ar[d]^{\xi_n}\\
          f(b_{n+1})\ar[r]_-{\beta_n} &
          b_n,
        }}
    \end{equation}
    i.e.\ that $\alpha_n \ct \xi_n = \ap_f(\xi_{n+1}) \ct \beta_n$.
  \end{enumerate}
\end{lemma}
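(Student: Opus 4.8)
The plan is to unwind the identity type of the given $\Sigma$-type using the standard characterizations from Chapter~2 of~\cite{hottbook}. Writing the type as $\sum_{a:\bN\to X} P(a)$ with $P(a) \defeq \prod_{n:\bN}(f(a_{n+1}) = a_n)$, an equality $(a,\alpha) = (b,\beta)$ is, by~\cite[Theorem 2.7.2]{hottbook}, the same as a pair consisting of a path $\eta : a = b$ together with an equality $\eta_*(\alpha) = \beta$ in $P(b)$, where $\eta_*$ denotes transport. By function extensionality, the path $\eta : a =_{\bN\to X} b$ corresponds precisely to a family of paths $\xi_n : a_n = b_n$; this is clause~(1), and from now on I fix such a $\xi$ and the corresponding $\eta$.

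For clause~(2), I would compute the transported condition $\eta_*(\alpha) = \beta$. Since both sides live in the $\Pi$-type $P(b)$, by function extensionality it suffices to check the equality pointwise, i.e.\ to give $(\eta_*(\alpha))_n = \beta_n$ in $f(b_{n+1}) = b_n$ for each $n:\bN$. Transport in a $\Pi$-type commutes with evaluation, so $(\eta_*(\alpha))_n$ is the transport of $\alpha_n$ along $\eta$ in the type family $c \mapsto (f(c_{n+1}) = c_n)$. This family has the form $c \mapsto (g(c) = h(c))$ with $g(c) \defeq f(c_{n+1})$ and $h(c) \defeq c_n$, so by~\cite[Theorem 2.11.3]{hottbook} the transport sends $\alpha_n$ to $\ap_g(\eta)^{-1} \ct \alpha_n \ct \ap_h(\eta)$.

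It then remains to identify these two instances of $\ap$. Since $h$ is evaluation at $n$ and $g$ is the composite of evaluation at $n+1$ with $f$, functoriality of $\ap$ together with the computation rule relating $\ap$ of an evaluation map to the components of a path built by function extensionality gives $\ap_h(\eta) = \xi_n$ and $\ap_g(\eta) = \ap_f(\xi_{n+1})$. Hence $(\eta_*(\alpha))_n = \ap_f(\xi_{n+1})^{-1} \ct \alpha_n \ct \xi_n$, and the condition $(\eta_*(\alpha))_n = \beta_n$ is equivalent, after composing with $\ap_f(\xi_{n+1})$ on the left, to $\alpha_n \ct \xi_n = \ap_f(\xi_{n+1}) \ct \beta_n$ — exactly the square in clause~(2). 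The only delicate point, and it is bookkeeping rather than a genuine obstacle, is keeping the direction of the inverse $\ap_f(\xi_{n+1})^{-1}$ and the order of composition in~\cite[Theorem 2.11.3]{hottbook} straight, so that the final rearrangement lands on the stated square rather than its transpose; everything else is a routine assembly of the Chapter~2 identity-type characterizations and function extensionality.
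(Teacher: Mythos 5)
Your proof is correct and follows exactly the route the paper has in mind: the paper's own ``proof'' consists of the single sentence that this is a straightforward application of the Chapter~2 results of~\cite{hottbook}, and your write-up is precisely the expected unwinding via~\cite[Theorem 2.7.2]{hottbook}, function extensionality, and~\cite[Theorem 2.11.3]{hottbook} together with the computation of $\ap$ of evaluation maps. Nothing to add.
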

\begin{proof}
  A straightforward application of the results of~\cite[Chapter 2]{hottbook}.
\end{proof}

Now we can prove the main theorem of this section.

\begin{thm}\label{thm:qidem-splits}
  Assuming function extensionality, any quasi-idempotent splits.
\end{thm}
\begin{proof}
  Given $(f,I,J)$, define $A \defeq\sum_{a:\bN\to X} \prod_{n:\bN} (f(a_{n+1}) = a_n)$ as above.
  We define $s:A\to X$ by $s(a,\alpha) \defeq a_0$, and $r:X\to A$ by the slightly less obvious formula
  \[ r(x) \defeq (\lambda n. f(x), \lambda n. I(x)).\]
  (Note that both components of $r(x)$ are actually constant functions, i.e.\ independent of $n$.)
  Now we obviously have $s\circ r = f$; the tricky part is proving $r\circ s = 1$.

  Let $(a,\alpha):A$, so that $a:\bN\to X$ and $\alpha : \prod_{n:\bN} (f(a_{n+1})=a_n)$.
  We must show $(a,\alpha) = r(s(a,\alpha))$.
  By definition, both components of $r(s(a,\alpha))$ are constant, the first at $f(a_0)$ and the second at $I(a_0)$; thus we need a family of equalities $a_n = f(a_0)$ that satisfy commutativity relations.
  For convenience, we break this down into two steps, by defining an intermediate element $(b,\beta):A$ and showing that $(b,\beta)=(a,\alpha)$ and also $(b,\beta)=r(s(a,\alpha))$.
  The definition is
  \begin{alignat*}{2}
    b_n &\defeq f(f(a_{n+1})) && \quad : X\\
    \beta_n &\defeq \ap_{f\circ f}(\alpha_{n+1}) &&\quad : f(f(f(a_{n+2}))) = f(f(a_{n+1}))
  \end{alignat*}

  To show that $(b,\beta)=(a,\alpha)$, we apply \cref{thm:path-split-idem} with
  \[ \xi_n \defeq I(a_{n+1}) \ct \alpha_n \quad : b_n = a_n. \]
  We thus have to show that
  \[ \ap_{f\circ f}(\alpha_{n+1}) \ct I(a_{n+1}) \ct \alpha_n =
  \ap_f(I(a_{n+2}) \ct \alpha_{n+1}) \ct \alpha_n
  \]
  which (after cancelling $\alpha_n$) we can do as follows:
  \begin{alignat*}{2}
    \ap_{f\circ f}(\alpha_{n+1}) \ct I(a_{n+1})
    &= I(f(a_{n+2})) \ct \ap_f(\alpha_{n+1}) &&\quad\text{(naturality)}\\
    &= \ap_f(I(a_{n+2})) \ct \ap_f(\alpha_{n+1})  &&\quad \text{(by }J(a_{n+2}))\\
    &= \ap_f(I(a_{n+2}) \ct \alpha_{n+1}) &&\quad\text{(functoriality)}
  \end{alignat*}\medskip

\noindent  Next we have to show that $(b,\beta)=r(s(a,\alpha))$.
  By definition, $r(s(a,\alpha)) \jdeq (\lambda n. f(a_0), \lambda n. I(a_0))$.
  Invoking \cref{thm:path-split-idem} again, we need to firstly construct $\xi_n : f(f(a_{n+1})) = f(a_0)$ for all $n$.
  We do this by induction on $n$.
  The base case $n\jdeq 0$ is simply $\ap_f(\alpha_0) : f(f(a_1)) = f(a_0)$, while
  the induction step is the composite
  \[ f(f(a_{n+2})) = f(a_{n+1}) = f(f(a_{n+1})) = f(a_0) \]
  of $\ap_f(\alpha_{n+1})$ and $I(a_{n+1})^{-1}$ with the induction hypothesis.
  
  It remains to show that $\ap_{f\circ f}(\alpha_{n+1}) \ct \xi_n = \ap_f(\xi_{n+1}) \ct I(a_0)$ for all $n$, and we do this by induction on $n$ as well.
  For the base case $n\jdeq 0$, this means to check that
  \[ \ap_{f\circ f}(\alpha_{1}) \ct \ap_f(\alpha_0) =
  \ap_f(\ap_f(\alpha_{1}) \ct I(a_{1})^{-1} \ct \ap_f(\alpha_0)) \ct I(a_0). \]
  Applying functoriality of $\ap_f$ on the right, canceling a copy of $\ap_{f\circ f}(\alpha_{1})$ on both sides, and rearranging a little this becomes
  \[ \ap_f(I(a_1)) \ct \ap_f(\alpha_0) = \ap_{f\circ f}(\alpha_0) \ct I(a_0). \]
  But using $J$ we can make this into
  \[ I(f(a_1)) \ct \ap_f(\alpha_0) = \ap_{f\circ f}(\alpha_0) \ct I(a_0). \]
  which is an instance of naturality for $I$.

  Finally, for the induction step, our inductive hypothesis is that the following diagram commutes:
  \begin{equation*}
    \vcenter{\xymatrix@C=12pc{
        f(f(f(a_{n+2})))\ar[r]^{\ap_f(\ap_f(\alpha_{n+1}) \ct I(a_{n+1})^{-1} \ct \xi_n)}
        \ar[d]_{\ap_{f\circ f}(\alpha_{n+1})} &
        % f(f(a_{n+1})) \ar[r]^-{\ap_f(I(a_{n+1}))^{-1}} &
        % f(f(f(a_{n+1}))) \ar[r]^-{\ap_f(\xi_n)} &
        f(f(a_0)) \ar[d]^{I(a_0)}\\
        f(f(a_{n+1}))\ar[r]_-{\xi_n} &
        f(a_0)
      }}
  \end{equation*}
  and our goal is (after applying functoriality of $\ap_f$) to prove that the outer boundary of the following diagram commutes.
  \begin{equation*}
  \vcenter{\xymatrix@R=4pc{
      f(f(f(a_{n+3}))) \ar[rr]^-{\ap_{f\circ f}(\alpha_{n+2})} \ar[d]_{\ap_{f\circ f}(\alpha_{n+2})} &&
      f(f(a_{n+2}))\ar[rr]^-{\ap_f(I(a_{n+2}))^{-1}}
      \ar@(dr,dl)[rr]_{I(f(a_{n+2}))^{-1}}
      \ar@{}[d]|{\text{(nat)}}&
      \ar@{}[d]|(.15){(J)} &
      f(f(f(a_{n+2})))
      \ar[d] %^{\ap_{f\circ f}(\alpha_{n+1})}
      \ar@{}[dr]|{\text{(IH)}}
      \ar[r] & %^{\ap_{f\circ f}(\alpha_{n+1})} &
      %\bullet % f(f(a_{n+1}))
      %\ar[r] & %^-{\ap_f(I(a_{n+1}))^{-1}} &
      %\bullet %f(f(f(a_{n+1})))
      %\ar[r] & %^-{\ap_f(\xi_n)} &
      f(f(a_0)) \ar[d]^{I(a_0)}\\
      f(f(a_{n+2}))\ar[rr]_-{\ap_f(\alpha_{n+1})} \ar[urr]^{\refl} &&
      f(a_{n+1})\ar[rr]_-{I(a_{n+1})^{-1}} &&
      f(f(a_{n+1}))\ar[r]_-{\xi_n} &
      f(a_0)
      }}
  \end{equation*}
  The square marked (IH) is just the inductive hypothesis, while what remains can be filled in by naturality and a further application of $J$.
  % (A proof assistant was very helpful in automatically calculating the hypotheses and goals here.)
\end{proof}

\begin{rmk}\label{thm:split-assumptions}
  Note that the type $A$ and the section $s:A\to X$ involved in the spltting can be defined without knowing either $I$ or $J$, while the retraction $r:X\to A$ and the homotopy $K : s\circ r \sim f$ require only $I$.
  It is only the other homotopy $H:r\circ s \sim \id_A$ that requires the extra coherence datum $J$, and likewise only this homotopy that requires function extensionality.
\end{rmk}

\section{Splitting is a retraction}
\label{sec:splitting-retraction}

\begin{asm}
  In this section we assume the univalence axiom.
\end{asm}

Recall from the introduction that we can define the types of retractions of $X$ and of quasi-idempotents on $X$:
\begin{align*}
  \Retr(X) &\defeq\textstyle \sum_{A:\type} \sum_{r:X\to A} \sum_{s:A\to X} \prod_{a:A} (r(s(a))=a) \\
  \QIdem(X) &\defeq  \textstyle \sum_{f:X\to X} \sum_{I : f\circ f \sim f} \prod_{x:X} (\ap_f(I(x)) = I(f(x)))
\end{align*}
Now \cref{thm:qidem-splits} defines a map
\[ \split : \QIdem(X) \to \Retr(X) \]
and \cref{thm:split-quasi} defines a map
\[ \uli : \Retr(X) \to \QIdem(X). \]
We will now prove the following theorem.

\begin{thm}\label{thm:splitting-retracts}
  $\split$ and $\uli$ exhibit $\Retr(X)$ as a retract of $\QIdem(X)$.
  In other words, $\split\circ\uli=\id_{\Retr(X)}$.
\end{thm}

Before proving this, however, we need to know how to construct equalities in $\Retr(X)$.

\begin{lemma}\label{thm:path-retr}
  Suppose given $(A,r,s,H)$ and $(A',r',s',H')$ in $\Retr(X)$.
  To give an equality $(A,r,s,H)=(A',r',s',H')$, it suffices to give
  \begin{enumerate}
  \item An equivalence $A\simeq A'$, including functions $g : A \to A'$ and $h : A' \to A$ and homotopies $\eta:g h \sim \id$ and $\epsilon: h g \sim \id$;
  \item A homotopy $P : h r' \sim r$;
  \item A homotopy $Q : s' g \sim s$; and
  \item A witness that $\ap_h (H'(g(a))) \ct \epsilon_a = P(s'(g(a))) \ct \ap_r(Q(a)) \ct H(a)$ for all $a:A$.
  \end{enumerate}
\end{lemma}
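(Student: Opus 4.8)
The plan is to unfold $\Retr(X)$ as the iterated $\Sigma$-type $\sum_{A:\type}\sum_{r:X\to A}\sum_{s:A\to X}(r\circ s\sim\id_A)$ and to apply the characterization of identity types of $\Sigma$-types from \cite[\S2.7]{hottbook} four times. This reduces the goal of constructing an inhabitant of $(A,r,s,H)=(A',r',s',H')$ to constructing, successively: a path $p:A=A'$; a path $p_*(r)=r'$ in $X\to A'$; a path over those identifying the transported $s$ with $s'$; and a path over those identifying the transported $H$ with $H'$. Item~(1) supplies an equivalence $e:A\simeq A'$ with quasi-inverse data $g,h,\eta,\epsilon$; by univalence we may take $p\defeq\mathsf{ua}(e)$, so that transport along $p$ (resp.\ $p^{-1}$) in the fibration $B\mapsto B$ is $g$ (resp.\ $h$), up to homotopy. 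Since the lemma only asserts that items~(1)--(4) \emph{suffice}, at each later stage it is enough to produce \emph{some} path from the given homotopy data, rather than to establish a full equivalence of types.

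I would then compute the two intermediate transports using \cite[\S2.9]{hottbook}. In the fibration $B\mapsto(X\to B)$, transport along $p$ is postcomposition with $g$, so the second piece is a path $g\circ r=r'$; given $P:h\circ r'\sim r$ from item~(2), I obtain such a path pointwise by applying $\ap_g$ to $P(x)$ and composing suitably with $\eta_{r'(x)}$, then invoking function extensionality. Dually, in the fibration $B\mapsto(B\to X)$, transport along $p$ is precomposition with $h$, so the third piece is a path $s\circ h=s'$; from $Q:s'\circ g\sim s$ of item~(3) I obtain it pointwise using $\ap_{s'}$, $\eta$, and function extensionality.

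The substantive step is the fourth piece: transporting $H:\prod_{a:A}(r(s(a))=a)$ along the combined path assembled from $p$, the identification of $r$, and the identification of $s$. Here I would peel the transport apart using the lemmas of \cite[\S2.9 and \S2.11]{hottbook}. The $\prod_{a:A}$ binder forces a reindexing of the argument by $h$ together with an $\epsilon$-correction; inside, the identity type $r(s(a))=a$ mentions the type $A$ (so transporting contributes an $\ap_h$), the function $r$ (so it contributes the value of $P$ at $s'(g(a))$), and the function $s$ (so it contributes $\ap_r(Q(a))$), hence transports to a concatenation of these three. Using functoriality of $\ap$, naturality of homotopies (\cite[Lemma 2.4.3]{hottbook}), and the identity-type transport lemmas, the requirement ``transported $H$ equals $H'$'' collapses, pointwise in $a:A$, to exactly the equation of item~(4); function extensionality then closes the goal. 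As a sanity check, both sides of that equation are paths $h(r'(s'(g(a))))=a$ in $A$, so the bookkeeping is at least type-correct.

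The main obstacle is precisely this last transport computation. There are three nested layers of transport --- over the type component $A$, over $r$, and over $s$ --- each interacting both with the $\prod$-binder and with the identity type $r(s(a))=a$, and steering the order of concatenation, the placement of inverses, the $\epsilon$-correction coming from the binder, and the cancellation of the $\eta$-terms introduced in the previous step so as to land on the stated form of item~(4) takes care, even though every individual manipulation is one of the routine lemmas of \cite[Chapter~2]{hottbook}.
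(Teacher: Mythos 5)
Your proposal is correct and follows essentially the same route as the paper: decompose the $\Sigma$-type equality via \cite[Theorem 2.7.2]{hottbook}, handle the first component by univalence, reduce the second and third components to the homotopies $P$ and $Q$ via the transport-of-functions lemmas of \cite[\S2.9]{hottbook} (the paper cites Lemma 2.9.6), and unwind the transport of $H$ to arrive at equation~(4). The paper's own proof leaves the details to the reader (and to the Coq formalization), so your more explicit unfolding of the intermediate transports --- including the $\eta$/$\epsilon$-corrections and the type-check of both sides of equation~(4) --- is consistent with and elaborates on what the paper asserts.
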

\begin{proof}
  This should be considered a straightforward application of the results of~\cite[Chapter 2]{hottbook} characterizing identity types of types obtained from different type-formers.
  Since $\Retr(X)$ is a triple $\Sigma$-type, by~\cite[Theorem 2.7.2]{hottbook} we can first decompose equalities in $\Retr(X)$ as quadruples of equalities in its constituent types.
  We then apply univalence to obtain an equivalence $A\simeq A'$ as the first component,~\cite[Lemma 2.9.6]{hottbook} to obtain $P$ and $Q$ as the second and third, and similarly for the fourth component.
  The details are tedious, so we leave them to the reader; like the rest of the paper they have been formalized in Coq.
\end{proof}

\begin{proof}[Proof of \cref{thm:splitting-retracts}]
  Suppose given a retraction $(A, r:X\to A, s:A\to X, H:r\circ s \sim 1)$; we want to show that it is equivalent to the splitting of the induced quasi-idempotent $s r : X \to X$.
  The latter is a new retraction $(A',r',s',H')$ such that $s r = s' r'$ (in fact this equality holds judgmentally).
  By \cref{thm:split-uniq}, we have an equivalence $A\simeq A'$ composed of $g = r' s : A \to A'$ and $h = r s' : A' \to A$, with $\eta : g h \jdeq r' s r s' = r' s' r' s' = \id$ and dually $\epsilon : h g \jdeq r s'  r' s = r s r s = \id$.
  Moreover, we have $P : h r' \jdeq r s' r' = r s r = r$ and $Q : s' g \jdeq s' r' s = s r s = s$; thus it remains to construct the fourth datum in \cref{thm:path-retr}.

  In general, both sides of this equality are homotopies $r s' r' s' r' s \sim 1$.
  Note that in our case, the domain $r s' r' s' r' s$ is judgmentally equal to $rsrsrs$.
  Substituting the definitions of $r'$, $s'$, and $H'$ from \cref{thm:qidem-splits}, we see that the left-hand side of the desired equality is the composite
  \begin{multline}\label{eq:splretr1}
    rsrsrsa \xrightarrow{\ap_{rsrs}(H(rsa))^{-1}} rsrsrsrsa \xrightarrow{\ap_{rs}(H(rsrsa))} rsrsrsa \\ \xrightarrow{\ap_{rs}(H(rsa))} rsrsa \xrightarrow{H(rsa)} rsa \xrightarrow{Ha} a
  \end{multline}
  while the right-hand side is the composite
  \begin{equation}
    rsrsrsa \xrightarrow{H(rsrsa)} rsrsa \xrightarrow{\ap_{rs}(Ha)} rsa \xrightarrow{Ha} a.\label{eq:splretr2}
  \end{equation}
  Now by naturality, we have
  \[{\ap_{rs}(H(rsrsa))} \ct {\ap_{rs}(H(rsa))} = {\ap_{rsrs}(H(rsa))} \ct {\ap_{rs}(H(rsa))}.\]
  Applying this in the middle of~\eqref{eq:splretr1}, and canceling ${\ap_{rsrs}(H(rsa))}$ with its inverse on the left, we reduce it to
  \[rsrsrsa \xrightarrow{\ap_{rs}(H(rsa))} rsrsa \xrightarrow{H(rsa)} rsa \xrightarrow{Ha} a.\]
  Now naturality gives ${\ap_{rs}(H(rsa))} \ct {H(rsa)} = H(rsrsa) \ct {H(rsa)}$, so this is equal to 
  \[rsrsrsa \xrightarrow{H(rsrsa)} rsrsa \xrightarrow{H(rsa)} rsa \xrightarrow{Ha} a. \]
  Comparing this to~\eqref{eq:splretr2}, we can cancel $H(rsrsa)$ on the left, reducing the problem to ${\ap_{rs}(Ha)}\ct Ha = {H(rsa)} \ct Ha$, which is another naturality.
\end{proof}

\cref{thm:splitting-retracts} makes no reference to a specified function $f$, but we can deduce from it a statement that does.
Given an endofunction $f$, we define a \textbf{splitting of $f$} to be a retraction $(A,r,s,H):\Retr(X)$ together with a homotopy $K:s\circ r \sim f$.
These form a type $\Split(X,f)\defeq \sum_{(A,r,s,H):\Retr(X)} (s\circ r \sim f)$.
We also have a type $\QIdem(X,f)\defeq \sum_{(I:f\circ f \sim f)} \prod_{x:X} (\ap_f(I(x)) = I(f(x)))$ of ``quasi-idempotence data for $f$''.

\begin{corollary}\label{thm:split-retract}
  For any $f:X\to X$, the type $\Split(X,f)$ is a retract of $\QIdem(X,f)$.
\end{corollary}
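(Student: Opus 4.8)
The plan is to deduce the corollary from \cref{thm:splitting-retracts} by ``localizing at $f$'': passing to the fibers of the natural projections to $X\to X$. By definition $\QIdem(X)$ \emph{is} $\sum_{f:X\to X}\QIdem(X,f)$, so $\QIdem(X,f)$ is the fiber over $f$ of the first projection $p:\QIdem(X)\to(X\to X)$. On the other side, sending a retraction to its underlying split endofunction defines $q:\Retr(X)\to(X\to X)$ by $q(A,r,s,H)\defeq s\circ r$, and by function extensionality the fiber $\mathsf{fib}_q(f)=\sum_{(A,r,s,H):\Retr(X)}(s\circ r=f)$ is equivalent to $\Split(X,f)$. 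So it suffices to exhibit $\mathsf{fib}_q(f)$ as a retract of $\mathsf{fib}_p(f)$.

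The key point is that $\split$ and $\uli$ are maps \emph{over} $X\to X$. For $\split$ this holds because the retraction $\split(f,I,J)$ built in \cref{thm:qidem-splits} has $s(r(x))\jdeq f(x)$, so that $q\circ\split\jdeq p$; for $\uli$ it holds because \cref{thm:split-quasi} equips $s\circ r$ itself (not some other function) with quasi-idempotence data, so that $p\circ\uli\jdeq q$. Hence $\split$ and $\uli$ restrict to maps $\QIdem(X,f)\to\Split(X,f)$ and $\Split(X,f)\to\QIdem(X,f)$, and their composite is an endomap of $\Split(X,f)$ which, by the equality $\split\circ\uli=\id_{\Retr(X)}$ of \cref{thm:splitting-retracts}, is homotopic to the identity --- \emph{provided} that equality is ``vertical'' over $X\to X$, i.e.\ its image under $q$ is $\refl$. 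This proviso is genuinely needed: a path in a $\Sigma$-type carries strictly more information than a path in a single fiber, so only homotopies of the total spaces whose projection to the base is trivial descend to the fibers.

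Thus the only real work is to verify this verticality, and I expect it to be the main obstacle. Fortunately the proof of \cref{thm:splitting-retracts} is already arranged for it: there $s\circ r$ and $s'\circ r'$ are literally judgmentally equal, and the equality $\split(\uli(A,r,s,H))=(A,r,s,H)$ is assembled, via \cref{thm:path-retr}, from an equivalence $A\simeq A'$ and homotopies $P,Q$ built out of $H$. I would trace the $q$-component of this equality through \cref{thm:path-retr} and check that the resulting loop at $s\circ r$ collapses to $\refl$ by the same naturality cancellations of $H$ (and the identity $\ap_{r\circ s}(H(a))=H(r(s(a)))$ extracted inside \cref{thm:split-quasi}) already used there. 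Once this is in hand, the fiberwise retract follows immediately. If one prefers to avoid the fiber formalism entirely, the same computation can be done directly: define $\Split(X,f)\to\QIdem(X,f)$ by conjugating the canonical quasi-idempotence data on $s\circ r$ along $K$, define $\QIdem(X,f)\to\Split(X,f)$ by \cref{thm:qidem-splits}, and verify that the first composite is homotopic to $\id_{\Split(X,f)}$ by the argument of \cref{thm:splitting-retracts} with transports along $K$ threaded through.
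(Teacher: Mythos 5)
Your proposal matches the paper's proof: both identify $\QIdem(X,f)$ and $\Split(X,f)$ with the fibers over $f$ of the projection $\QIdem(X)\to(X\to X)$ and of $(A,r,s,H)\mapsto s\circ r$, respectively, note that $\split$ and $\uli$ commute (judgmentally) with these maps, and then deduce the fiberwise retraction from \cref{thm:splitting-retracts}. The coherence you correctly flag as the real work --- that the homotopy $\split\circ\uli\sim\id_{\Retr(X)}$ must be suitably vertical over $X\to X$ for it to descend to fibers --- is exactly what is packaged in the lemma on retracts of maps that the paper cites ([Lemma 4.7.3] of the HoTT Book), so your instinct about where the content lies is right, and the argument is sound once that is discharged.
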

\begin{proof}
  Let $k:\Retr(X) \to (X\to X)$ take $(A,r,s,H)$ to the composite $s r$.
  Then $\Split(X,f)$ is, by definition, the \emph{fiber} of $k$ over $f$.
  On the other hand, by~\cite[Lemma 4.8.1]{hottbook}, the type $\QIdem(X,f)$ is equivalent to the fiber over $f$ of the first projection $\QIdem(X) \to (X\to X)$.
  The retraction from \cref{thm:splitting-retracts} commutes with these maps to $X\to X$; hence by~\cite[Lemma 4.7.3]{hottbook}, it induces a retraction between their fibers.
\end{proof}

Similarly, we can consider the case when $f$ is already equipped with a witness $I$ of pre-idempotency.
We define $\Split(X,f,I)$ to be
\[ \textstyle \sum_{(A,r,s,H,K):\Split(X,f)} \prod_{x:X} (\ap_f(K(x))^{-1} \ct K(s(r(x)))^{-1} \ct \ap_s(H(r(x))) \ct K(x) = I(x)), \]
the long composite being just the result of transferring the definition of \cref{thm:split-pre} across the homotopy $K : s r \sim f$.
And of course we have the type $\QIdem(X,f,I)\defeq \prod_{x:X} (\ap_f(I(x)) = I(f(x)))$ of quasi-idempotence enhancements of $I$.

\begin{corollary}\label{thm:split-retract-I}
  For any $(f,I)$, the type $\Split(X,f,I)$ is a retract of $\QIdem(X,f,I)$.
\end{corollary}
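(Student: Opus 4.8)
The plan is to run the argument of \cref{thm:split-retract} one further level down, now slicing over the witness of pre-idempotency rather than over the underlying endofunction. First I would recognize $\Split(X,f,I)$ and $\QIdem(X,f,I)$ as fibers. The type $\Split(X,f)$ comes equipped with a map $j:\Split(X,f)\to(f\circ f\sim f)$ sending a splitting $(A,r,s,H,K)$ to the long composite $\ap_f(K(x))^{-1}\ct K(s(r(x)))^{-1}\ct\ap_s(H(r(x)))\ct K(x)$ appearing in the definition of $\Split(X,f,I)$ --- that is, to the witness of pre-idempotency of \cref{thm:split-pre} transported across $K$. Using function extensionality (available here, since we are assuming univalence), $\Split(X,f,I)$ is then equivalent to the fiber of $j$ over $I$. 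Dually, by~\cite[Lemma 4.8.1]{hottbook} the type $\QIdem(X,f,I)$ is equivalent to the fiber over $I$ of the first projection $\pi:\QIdem(X,f)\to(f\circ f\sim f)$.

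By \cref{thm:split-retract} (the restriction of \cref{thm:splitting-retracts}) we already have maps $\uli:\Split(X,f)\to\QIdem(X,f)$ and $\split:\QIdem(X,f)\to\Split(X,f)$ with $\split\circ\uli=\id$. So by~\cite[Lemma 4.7.3]{hottbook} it suffices to check that $\uli$ and $\split$ commute with $j$ and $\pi$, i.e.\ that $\pi\circ\uli=j$ and $j\circ\split=\pi$; then these two maps exhibit $j$ as a retract of $\pi$ over $(f\circ f\sim f)$, and passing to fibers over $I$ gives the claim. The identity $\pi\circ\uli=j$ is essentially definitional: $\uli$ is built (\cref{thm:split-quasi}) by equipping $s\circ r$ with the witness $\ap_s(H(r(-)))$ of \cref{thm:split-pre}, and its restriction to $\Split(X,f)$ transports this along $K$ to a witness for $f$, which is exactly $j$.

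The main obstacle is the second identity, $j\circ\split=\pi$: for quasi-idempotence data $(I,J)$ on $f$, the splitting $\split(I,J)$ constructed in \cref{thm:qidem-splits} must have induced pre-idempotency witness equal to $I$. To see this I would unwind that construction. There $s(a,\alpha)\jdeq a_0$, $r(x)\jdeq(\lambda n.\,f(x),\lambda n.\,I(x))$, and $K\jdeq\refl$, so $j(\split(I,J))(x)$ reduces to $\ap_s(H(r(x)))$. Tracing through the definition of $H$ --- built from two paths through the intermediate element $(b,\beta)$ of \cref{thm:qidem-splits}, with $\ap_s$ extracting the $n=0$ instance of the component equalities $\xi_n$ --- one computes $\ap_s(H(a,\alpha))=\ap_f(\alpha_0)^{-1}\ct I(a_1)\ct\alpha_0$, hence $\ap_s(H(r(x)))=\ap_f(I(x))^{-1}\ct I(f(x))\ct I(x)$. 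Cancelling, the required equation $\ap_s(H(r(x)))=I(x)$ reduces precisely to $\ap_f(I(x))=I(f(x))$, which is $J(x)$. (So $J$ --- and the extra component carried by a point of $\QIdem(X,f,I)$ --- is exactly what is needed; this is the one calculation of substance, and the remaining function-extensionality bookkeeping is routine and has been carried out in Coq.) With both commutativities in hand, \cite[Lemma 4.7.3]{hottbook} yields that $\Split(X,f,I)$ is a retract of $\QIdem(X,f,I)$.
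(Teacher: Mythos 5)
Your proposal is correct and follows the route the paper gestures at: view $\Split(X,f,I)$ and $\QIdem(X,f,I)$ as fibers over $I$ of maps to $f\circ f \sim f$, and pass the retraction of \cref{thm:split-retract} down to those fibers via \cite[Lemma 4.7.3]{hottbook}. The paper relegates all details of this to the formalization; your inline verification that $j\circ\split=\pi$ --- i.e.\ that the splitting of a quasi-idempotent $(f,I,J)$ has induced pre-idempotency witness equal to $I$, with the coherence $J$ being precisely what cancels --- is exactly what the paper later isolates and proves as \cref{thm:I-recovers}.
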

\begin{proof}
  As in \cref{thm:split-retract}, we take fibers of two maps to the type $f\circ f \sim f$ of $I$.
  We leave the details to the reader; or they can be found in the formalization.
\end{proof}

\section{Splitting is not an equivalence}
\label{sec:splitting-not-equiv}

We now consider what can be said about the composite $\uli\circ\split$, which is an endofunction of $\QIdem(X)$.
Our first observation is that it preserves the witness $I$ of \emph{pre}-idempotence.

\begin{thm}\label{thm:I-recovers}
  Assume function extensionality.
  Then given a quasi-idempotent $(f,I,J)$, if we split it as in \cref{thm:qidem-splits}, the witness of pre-idempotence induced from the splitting as in \cref{thm:split-pre} is equal to $I$.
\end{thm}
\begin{proof}
  Given a retraction $s:A\to X$ and $r:X\to A$ with $H : r\circ s \sim 1$, the induced $I$ was defined in \cref{thm:split-pre} by $I(x)\defeq \ap_s (H(r(x)))$.
  For the splitting from \cref{thm:qidem-splits} with $A \defeq \sum_{a:\bN\to X} \prod_{x:X}(f(a_{n+1})=a_n)$, we have $s(a,b)\defeq a_0$, so the induced $I'(x)$ is just the $0$-component of the homotopy $H:r\circ s \sim 1$ at $r(x) \defeq (\lambda n. f(x), \lambda n.I(x))$.
  By construction, this is the composite
  \[ f(f(x)) \xrightarrow{\ap_f(I(x))^{-1}} f(f(f(x))) \xrightarrow{I(f(x))} f(f(x)) \xrightarrow{I(x)} f(x)\]
  where $I$ is the given witness of pre-idempotence.
  But by the given $J$, we have $\ap_f(I(x)) = I(f(x))$, so this reduces to just $I(x)$.
\end{proof}

Thus, if the further coherence witness $J$ were also recovered from the splitting, we would have $\uli\circ\split = \id$, and hence (assuming univalence, so that the results of the previous section apply) $\split$ and $\uli$ would be inverse \emph{equivalences} between $\Retr(X)$ and $\QIdem(X)$.
By \cref{thm:split-retract} and \cref{thm:split-retract-I}, this would also yield equivalences between $\Split(X,f)$ and $\QIdem(X,f)$, and between $\Split(X,f,I)$ and $\QIdem(X,f,I)$, for any $f$ and $I$.
We will show that this is impossible in general, beginning with the following observation.

\begin{lemma}
  Assuming univalence, if $f \defeq \id_X$ and $I(x)\defeq \refl_{x}$ for all $x$, then the type $\Split(X,f,I)$ is contractible.
\end{lemma}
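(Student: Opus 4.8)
The plan is to reduce $\Split(X,\id_X,\refl)$, by a short path-algebra simplification and a reassociation of $\Sigma$-types, to $\sum_{A:\type}(A\simeq X)$, which is contractible by univalence (it is equivalent to the singleton $\sum_{A:\type}(A=X)$). First I would unfold the definition of $\Split(X,f,I)$ from \cref{sec:splitting-retraction} with $f\jdeq\id_X$ and $I(x)\jdeq\refl_x$; since then $\ap_f=\ap_{\id_X}$ acts as the identity on paths (up to propositional equality), the coherence component reads $\prod_{x:X}\bigl(K(x)^{-1}\ct K(s(r(x)))^{-1}\ct\ap_s(H(r(x)))\ct K(x)=\refl_x\bigr)$. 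A routine computation (conjugating away the two copies of $K(x)$, then cancelling $K(s(r(x)))^{-1}$) shows this type is equivalent to $\prod_{x:X}\bigl(\ap_s(H(r(x)))=K(s(r(x)))\bigr)$. Reassociating the $\Sigma$-types to pull $A$ and $s$ to the front, $\Split(X,\id_X,\refl)$ is therefore equivalent to
\[ \textstyle \sum_{A:\type}\,\sum_{s:A\to X}\Bigl(\,\sum_{r:X\to A}\,\sum_{H:r\circ s\sim\id_A}\,\sum_{K:s\circ r\sim\id_X}\,\prod_{x:X}\bigl(\ap_s(H(r(x)))=K(s(r(x)))\bigr)\Bigr). \]

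The key claim is that, fiberwise over $(A,s)$, the parenthesized inner $\Sigma$-type is equivalent to $\mathsf{isequiv}(s)$. Granting this, $\Split(X,\id_X,\refl)\simeq\sum_{A:\type}\sum_{s:A\to X}\mathsf{isequiv}(s)=\sum_{A:\type}(A\simeq X)$, which is contractible as noted, and the lemma follows.

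To prove the claim, since $\mathsf{isequiv}(s)$ is a mere proposition it is enough to give maps both ways and to show the inner type is itself a mere proposition. From the inner type one reads off $r,H,K$, which already make $r$ a quasi-inverse of $s$, hence $\mathsf{isequiv}(s)$. Conversely, from $\mathsf{isequiv}(s)$ one gets a half-adjoint-equivalence structure on $s$ as in~\cite[\S4.2]{hottbook}: an $r:X\to A$ with $H:r\circ s\sim\id_A$, $K:s\circ r\sim\id_X$, and $\tau:\prod_{a:A}(\ap_s(H(a))=K(s(a)))$; then $\lambda x.\,\tau(r(x))$ inhabits the inner type. For the ``mere proposition'' part: by these two maps the inner type is logically equivalent to $\mathsf{isequiv}(s)$, so it suffices to show it is contractible under the hypothesis that $s$ is an equivalence. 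Under that hypothesis precomposition $(-\circ s):(X\to A)\to(A\to A)$ is an equivalence (function extensionality, from univalence), so its fiber over $\id_A$ --- which is $\sum_{r:X\to A}(r\circ s\sim\id_A)$ --- is contractible; contracting it to a center $(r_0,H_0)$ reduces the inner type to $\sum_{K:s\circ r_0\sim\id_X}\prod_{x:X}\bigl(\ap_s(H_0(r_0(x)))=K(s(r_0(x)))\bigr)$. Writing $\phi\defeq s\circ r_0:X\to X$ (again an equivalence) and using that $s(r_0(x))\jdeq\phi(x)$, this last type is, by function extensionality, exactly the fiber of the equivalence $(-\circ\phi):\prod_{y:X}(\phi(y)=y)\to\prod_{x:X}(\phi(\phi(x))=\phi(x))$ over the point $\lambda x.\,\ap_s(H_0(r_0(x)))$, hence contractible.

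I expect the main obstacle to be bookkeeping rather than anything conceptual: keeping all the identity types aligned through the $\ap_s$/conjugation manipulations in the opening reduction, and making sure ``precomposition with an equivalence is an equivalence'' and ``an equivalence has contractible fibers'' are applied to exactly the right maps, with the tacit uses of function extensionality (to pass between homotopies $\sim$ and identities $=$) placed correctly. None of these steps is deep, but the nested $\Sigma/\Pi$ structure makes it easy to misplace a path --- which is presumably why this paper defers the analogous details to the Coq formalization.
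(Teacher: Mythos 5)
Your proof is correct and structurally the same as the paper's: both start from the observation that $\sum_{A:\type}(A\simeq X)$ is contractible by univalence, both perform the same path-algebra simplification of the coherence datum to $\prod_{x:X}\bigl(\ap_s(H(r(x)))=K(s(r(x)))\bigr)$, and both then identify the remaining $\Sigma$-type fibered over $A$ with the type of equivalence data on $s$.

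The one place you diverge is in the last step. The paper transports the coherence from $\prod_{x:X}$ to $\prod_{a:A}$ along the equivalence $r$, at which point the residual over $A$ is \emph{verbatim} the half-adjoint-equivalence type $\mathsf{ishae}(s)$ from~\cite[\S4.2]{hottbook}, and it simply cites the known fact that this is equivalent to $A\simeq X$. You instead keep the $\prod_{x:X}$ form and re-derive the needed fact from scratch: you show the inner $\Sigma$ over $(r,H,K,\text{coh})$ is logically interderivable with $\mathsf{isequiv}(s)$ (one direction via quasi-inverse, the other via a half-adjoint structure composed with $r$) and then show it is a mere proposition by a two-stage contraction (first contracting $\sum_r\sum_H$ as the fiber of $(-\circ s)$ over $\id_A$, then contracting $\sum_K\prod_x(\dots)$ as the fiber of $(-\circ\phi)$ with $\phi\defeq s\circ r_0$). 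This is sound — it relies on exactly the same ingredients as the proof that $\mathsf{ishae}$ is a mere proposition (precomposition with an equivalence is an equivalence, contractibility of fibers and singletons) — so it is best viewed as inlining that HoTT-book lemma rather than as a different route. The paper's version is shorter precisely because it outsources that work to the cited theorem.
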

\begin{proof}
  Recall that for any type $B$ and point $b_0:B$, the type $\sum_{b:B} (b=b_0)$ is contractible.
  By univalence, it follows that for any type $X$, the type $\sum_{A:\type} (A\simeq X)$ is contractible.
  Since $\Split(X,f,I)$ begins with a $\sum_{A:\type}$, it will suffice to show that the rest of it is equivalent to $(A\simeq X)$.

  We will use the ``half-adjoint equivalence'' definition of $(A\simeq X)$ from~\cite[\S4.2]{hottbook}.
  The data $r$ and $s$ are, of course, maps back and forth, while since $f\jdeq \id_X$ the data $H$ and $K$ have the right types to be the homotopies $\epsilon$ and $\eta$.
  It remains, therefore, to show that the type of the remaining datum:
  \[ \textstyle \prod_{x:X} (\ap_f(K(x))^{-1} \ct K(s(r(x)))^{-1} \ct \ap_s(H(r(x))) \ct K(x) = I(x))\]
  is equivalent to $\prod_{a:A} (\ap_s(H(a)) = K(s(a)))$.
  Now since $f\jdeq \id_X$ and $I(x)\jdeq \refl_x$, we can discard the $\ap_f$, move the $K(x)^{-1}$ to the other side, and then cancel it.
  If we move $K(s(r(x)))$ to the other side as well, we obtain $\prod_{x:X} (\ap_s(H(r(x))) = K(s(r(x))))$.
  Finally, since $s$, $H$, and $K$ suffice to show that $r$ is an equivalence, we can transport along it to obtain the desired type $\prod_{a:A} (\ap_s(H(a)) = K(s(a)))$.
\end{proof}

Therefore, if we had $\uli\circ\split = \id$, then $\QIdem(X,\id_X,\lambda x.\refl_x)$ would also be contractible for any $X$.
However, $\QIdem(X,\id_X,\lambda x.\refl_x)$ reduces to $\prod_{x:X} (\refl_x = \refl_x)$, which we might call the \textbf{2-center} of $X$ (see \cref{sec:baut-baut-bool} for why).
Thus, it suffices to construct a type $X$ whose 2-center has nontrivial inhabitants.
Of course, such an $X$ cannot be a set or even a 1-type, but it will suffice for it to be a 2-type (i.e.\ its twice-iterated equality types $p =_{(x=_X y)} q$ are sets).

\begin{rmk}
  As pointed out by a referee, there are many ways to construct such a 2-type using higher inductive types.
  For instance, if $X$ is the 2-truncation of the 2-sphere, we can define an element of $\prod_{x:X} (\refl_x = \refl_x)$ by truncation-induction (since $\refl_x = \refl_x$ is a 0-type) followed by sphere-induction, sending the basepoint to the generating 2-loop and the rest being trivial for truncation reasons.
  More generally, we could take $X$ to be an Eilenberg--Mac Lane space $K(G,2)$ for any nontrivial abelian group $G$ (see~\cite{lf:emspaces}) --- the 2-truncation of the 2-sphere is a $K(\mathbb{Z},2)$.
  However, if we are willing to work a little harder, we can obtain such a 2-type using only univalence and propositional truncation: just as $\baut(\btwo)$ supports a nontrivial element of the \textbf{1-center} $\prod_{x:X} (x=x)$, to find a nontrivial element of the 2-center we can use $X\defeq \baut(\baut(\btwo))$.
\end{rmk}

\begin{thm}\label{thm:2center-bautbautbool}
  Assuming univalence and propositional truncation, if $X\defeq \baut(\baut(\btwo))$, then $\prod_{x:X} (\refl_x = \refl_x)$ has a nontrivial element.
\end{thm}
\begin{proof}[Idea of proof]
  As an \oo-groupoid, $\baut(\btwo)$ has one object with two automorphisms, the identity and the flip.
  Since automorphisms preserve identities, $\baut(\btwo)$ \emph{itself} has only one automorphism, but there are two self-homotopies of that automorphism.
  In other words, the space of automorphisms of $\baut(\btwo)$ is equivalent to $\baut(\btwo)$ itself.
  Thus, $\baut(\baut(\btwo))$ has one object, with only its identity morphism, but two 2-morphisms from that identity to itself.
  This nonidentity 2-morphism is essentially our desired nontrivial element.
  
  However, proving this carefully in type theory requires a lot of lemmas about classifying spaces, so we defer it to the next section.
  (An alternative proof can be found in~\cite[Lemma 7.5.2]{kraus:thesis}.)
\end{proof}

\begin{corollary}
  In \mltt\ with function extensionality (which is necessary to construct the function $\split$), it is impossible to prove that $\uli\circ\split = \id_{\QIdem(X)}$ for every type $X$.\qed
\end{corollary}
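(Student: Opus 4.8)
The plan is to argue by a combination of contradiction and consistency. Suppose \mltt\ with function extensionality proved $\uli\circ\split=\id_{\QIdem(X)}$ for every type $X$. Then the same would hold in any consistent extension of it --- in particular in \mltt\ with the univalence axiom and propositional truncation, which together entail function extensionality and are consistent, e.g.\ by the simplicial set model~\cite{klv:ssetmodel}. So it suffices to derive an outright contradiction from $\uli\circ\split=\id_{\QIdem(X)}$ while assuming univalence and propositional truncation, using a type $X$ that those axioms allow us to build; the required $X$ will be $\baut(\baut(\btwo))$.

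So I would work under univalence and propositional truncation and assume $\uli\circ\split=\id_{\QIdem(X)}$. Together with \cref{thm:splitting-retracts}, which already gives $\split\circ\uli=\id_{\Retr(X)}$, this makes $\split$ and $\uli$ a pair of mutually inverse equivalences $\QIdem(X)\simeq\Retr(X)$. The next step is to carry this down to the fibrewise statement: \cref{thm:split-retract-I} exhibits $\Split(X,f,I)$ as a retract of $\QIdem(X,f,I)$ \emph{by the maps induced by $\split$ and $\uli$ on the appropriate fibres}, and a retraction induced by an equivalence is itself an equivalence, so under our assumption $\Split(X,f,I)\simeq\QIdem(X,f,I)$ for every $f$ and every $I$.

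Now I would specialise to $X\defeq\baut(\baut(\btwo))$, $f\defeq\id_X$ and $I(x)\defeq\refl_x$. The (unlabelled) lemma immediately preceding \cref{thm:2center-bautbautbool} shows that $\Split(X,\id_X,\lambda x.\refl_x)$ is contractible, so by the previous paragraph $\QIdem(X,\id_X,\lambda x.\refl_x)$ is contractible as well. But $\QIdem(X,\id_X,\lambda x.\refl_x)$ unfolds judgmentally to $\prod_{x:X}(\ap_{\id_X}(\refl_x)=\refl_x)\jdeq\prod_{x:X}(\refl_x=\refl_x)$, the ``2-center'' of $X$, and \cref{thm:2center-bautbautbool} provides a nontrivial element of it --- contradicting contractibility, since in a contractible type every element is equal to $\lambda x.\refl_{\refl_x}$. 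Everything here is bookkeeping (composing the retraction data of \cref{thm:splitting-retracts} and \cref{thm:split-retract-I} with the hypothesised inverse, and unfolding the definition of $\QIdem(X,f,I)$) except for \cref{thm:2center-bautbautbool} itself, the nontriviality of the 2-center of $\baut(\baut(\btwo))$: that is the genuine mathematical content and the main obstacle, and it is where I would expect to spend real effort --- but it has been isolated as a separate theorem, proved in \cref{sec:baut-baut-bool}.
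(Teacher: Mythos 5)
Your proof is correct and follows essentially the same route as the paper: assume $\uli\circ\split=\id$, combine it with \cref{thm:splitting-retracts} and the fibrewise \cref{thm:split-retract-I} to get $\Split(X,f,I)\simeq\QIdem(X,f,I)$, specialize to $f\defeq\id_X$ and $I\defeq\lambda x.\refl_x$ so the left side is contractible by the unlabelled lemma while the right side unfolds to the 2-center, and then contradict \cref{thm:2center-bautbautbool} with $X\defeq\baut(\baut(\btwo))$. The framing via consistency of univalence and propositional truncation is exactly what the paper's ``\qed'' leaves implicit.
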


\section{The double classifying space of 2}
\label{sec:baut-baut-bool}

Here we will prove \cref{thm:2center-bautbautbool}.
For this we need some preliminary lemmas about types of the form $\baut(X)$.

\begin{asm}
  Throughout this section we assume both univalence and propositional truncation.
\end{asm}

Our first lemma says that defining a section of a family of sets indexed by $\baut(X)$ is equivalent to giving an element lying over $X$ itself which is fixed by all automorphisms of $X$.
To make sense of ``fixed by'', we use the notion of \textbf{transport}: given any type family $B:A\to \type$, if $p : x=_A y$ we have a function $p_* : B(x) \to B(y)$ defined by identity-type elimination (see~\cite[Chapter 2]{hottbook} for more information).

For convenience, we will frequently implicitly coerce elements of $\baut(X)$ to their underlying types.

\begin{lemma}\label{thm:baut-ind-hset}
  Let $X$ be any type, and suppose $P:\baut(X)\to\type$ is a family of sets.
  Then
  \[ \textstyle
  \Big(\prod_{Z:\baut(X)} P(Z) \Big) \simeq
  \Big(\sum_{e : P(X)} \prod_{g:X=X} g_*(e)=e\Big). \]
\end{lemma}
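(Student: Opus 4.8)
The plan is to exhibit maps in both directions and check they are mutually inverse, treating $\baut(X)$ as a pointed connected type with basepoint $x_0 \defeq (X,\bproj{\refl_X})$. Two preliminaries are needed. First, $\baut(X)$ is connected: for every $Z:\baut(X)$ we have $\brck{x_0 = Z}$, since from $p : Z = X$ the pair consisting of $p^{-1}$ and (automatically, $\brck{Z=X}$ being a mere proposition) a witness in the second component gives a path $x_0 = Z$. Second, by the characterization of paths in $\Sigma$-types and the fact that $\brck{-=X}$ is a mere proposition, the loop space $(x_0 = x_0)$ is equivalent to $X = X$; under this equivalence transport along a loop $g : X=X$ is to be understood as transport in $P$ along the corresponding loop in $\baut(X)$, which is what makes the expression $g_*(e)=e$ in the statement meaningful. (Here and throughout one silently uses the coercion of elements of $\baut(X)$ to their underlying types.)

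For the map from left to right, send a section $\sigma$ to the pair whose first component is $\sigma(x_0) : P(X)$ and whose second component sends a loop $g : X=X$ to the dependent action of $\sigma$ on the corresponding loop of $\baut(X)$, which has type $g_*(\sigma(x_0)) = \sigma(x_0)$ — exactly the required fixedness datum. For the map from right to left, suppose given $(e,\phi)$ with $e:P(X)$ and $\phi : \prod_{g:X=X}\,g_*(e)=e$. For each $Z:\baut(X)$ the naive idea is ``transport $e$ along a path $x_0 = Z$,'' which exists only merely; so define $\psi_Z : (x_0 = Z)\to P(Z)$ by $\psi_Z(q)\defeq q_*(e)$, and observe that $\psi_Z$ is weakly constant: for $q,q' : x_0 = Z$, applying $\phi$ to the loop built from $q$ and $q'^{-1}$ together with functoriality of transport yields $q_*(e) = q'_*(e)$. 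Since $P(Z)$ is a set, the factorization of weakly constant maps into sets through the propositional truncation \cite{keca:anon} gives $\overline{\psi_Z} : \brck{x_0=Z}\to P(Z)$ with $\overline{\psi_Z}(\bproj q)\jdeq \psi_Z(q)$, and we set $\sigma(Z)\defeq \overline{\psi_Z}(c_Z)$ where $c_Z:\brck{x_0=Z}$ is the connectedness witness. This weak-constancy-plus-factorization move — essentially Kraus's trick — is the main content of the proof; the rest is bookkeeping.

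It remains to verify the two round-trips (using function extensionality, which is available since we assume univalence). Starting from $(e,\phi)$: the first component becomes $\overline{\psi_{x_0}}(c_{x_0})$, and since $\brck{x_0=x_0}$ is a mere proposition, $c_{x_0} = \bproj{\refl_{x_0}}$, so this equals $\psi_{x_0}(\refl_{x_0})\jdeq e$; the second component is then forced to agree with $\phi$ because each $g_*(e)=e$ is a mere proposition (an equation in the set $P(X)$), so the target $\Sigma$-type has the property that equality of first components suffices. Starting from a section $\sigma$: by function extensionality it suffices to show $\overline{\psi_Z}(c_Z)=\sigma(Z)$ for each $Z$, where now $\psi_Z(q)=q_*(\sigma(x_0))$; this is an equation in the set $P(Z)$, hence a mere proposition, so we may perform induction on the truncation $\brck{x_0=Z}$ (in particular on $c_Z$), reducing to the case $c_Z = \bproj q$, where $\overline{\psi_Z}(\bproj q)\jdeq q_*(\sigma(x_0))=\sigma(Z)$ by the dependent action of $\sigma$ on $q$. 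The remaining identifications — matching transports along $\baut(X)$-loops with the notion $g_*$ for $g:X=X$, and the precise groupoid-law manipulations in the weak-constancy step — are routine applications of \cite[Chapters 2--3]{hottbook}, and the fully careful argument is in the Coq formalization.
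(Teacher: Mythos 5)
Your proof is correct, but it takes a genuinely different route from the paper's. The paper proceeds by a chain of type equivalences: it curries $\prod_{Z:\baut(X)}P(Z)$ into $\prod_{Z:\type}\bigl(\brck{Z=X}\to P(Z)\bigr)$, replaces each factor $\brck{Z=X}\to P(Z)$ with the type of weakly constant functions $(Z=X)\to P(Z)$ (using that $P(Z)$ is a set), applies the type-theoretic axiom of choice to pull the $\Sigma$ out, and then uses based path induction twice --- once to collapse the double $\prod_{p,q:Z=X}$ to a single $\prod_{p:X=X}$ and once to identify $\prod_{Z:\type}(Z=X)\to P(Z)$ with $P(X)$. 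At no point does it construct explicit back-and-forth maps or appeal to connectedness of $\baut(X)$. Your proof instead fixes the basepoint $x_0$, observes connectedness, and builds the two maps by hand --- $\mathsf{apd}$ in one direction, the weak-constancy factorization applied to $q\mapsto q_*(e)$ in the other --- followed by the two round-trip checks. The two arguments ultimately hinge on the same fact (the Kraus--Escard\'o--Coquand--Altenkirch factorization of weakly constant maps into sets through the truncation, cited the same way in both), so this is a repackaging rather than a new idea, but the organization is different: the paper's version is shorter because based path induction does the work that your connectedness-plus-round-trip verification does.

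One small caution in your write-up: you assert $\overline{\psi_Z}(\bproj q)\jdeq\psi_Z(q)$ with a judgmental equality sign. The recursion principle of propositional truncation yields a judgmental computation rule only when the target is a mere proposition; the factorization of a weakly constant map into a \emph{set} through $\brck{-}$ is, in general, only propositional. This does not damage your argument --- both places where you invoke this equality feed into a propositional equality you are trying to establish anyway, and $P(Z)$ being a set means any two such witnesses agree --- but the $\jdeq$ should be $=$.
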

\begin{proof}
  Since $\baut(X) \defeq \sum_{Z:\type}\brck{Z=X}$, we have
  \begin{equation}
    \textstyle \Big(\prod_{Z:\baut(X)} P(Z)\Big) \simeq
    \Big( \prod_{Z:\type} \big( \brck{Z=X} \to P(Z)\big)\Big).\label{eq:baut-ind-lhs}
  \end{equation}
  Now recall from~\cite[Theorem 5.4]{keca:anon} that if $B$ is a set, then a function $A\to B$ factors through $\brck{A}$ if and only if it is weakly constant.
  In fact, it is not hard to show that when $B$ is a set, the type $\brck{A} \to B$ is equivalent to the type of weakly constant functions $A\to B$.
  Thus, the right-hand-side of~\eqref{eq:baut-ind-lhs} is equivalent to
  \[ \textstyle \prod_{Z:\type} \sum_{f:(Z=X) \to P(Z)} \prod_{p,q:Z=X} (f(p)=f(q)).\]
  Rearranging this with~\cite[Theorem 2.15.7]{hottbook} (the ``type-theoretic axiom of choice''), we obtain
  \[ \textstyle \sum_{f:\prod_{Z:\type} (Z=X) \to P(Z)}
  \prod_{Z:\type} \prod_{p,q:Z=X} (f(Z,p)=f(Z,q)). \]
  Applying the universal property of identity types~\cite[(2.15.10)]{hottbook}, this becomes
  \[ \textstyle \sum_{f:\prod_{Z:\type} (Z=X) \to P(Z)}
  \prod_{p:X=X} (f(X,p)=f(X,\refl_X)). \]
  The same property implies that $\prod_{Z:\type} (Z=X) \to P(Z)$  is equivalent to $P(X)$, where the inverse equivalence sends $e:P(X)$ to $\lambda Z. \lambda q. q_*(e)$.
  Transferring across this equivalence, we obtain the desired result.
\end{proof}

We can use this to characterize types of the form $\prod_{Z:\baut(X)} (Z=Z)$, which are ``one level down'' from the type $\prod_{Z:\baut(X)} (\refl_Z = \refl_Z)$ considered in \cref{thm:2center-bautbautbool}.

\begin{lemma}\label{thm:center-baut}
  If $X$ is a set, then $\prod_{Z:\baut(X)} (Z=Z)$ is equivalent to
  \[ \textstyle \sum_{f:X\simeq X} \prod_{g:X\simeq X} (f\circ g = g \circ f) \]
\end{lemma}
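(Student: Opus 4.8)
The plan is to apply \cref{thm:baut-ind-hset} directly, with the type family $P(Z) \defeq (Z = Z)$. First I would check that this $P$ is indeed a family of sets: since $X$ is assumed to be a set, each underlying type $Z$ of an element of $\baut(X)$ is merely equal to $X$, hence is itself a set (being a set is a mere proposition and is transported along equalities), and therefore $Z = Z$ is also a set. With this verified, \cref{thm:baut-ind-hset} immediately gives
\[ \textstyle \Big(\prod_{Z:\baut(X)} (Z=Z)\Big) \simeq \Big(\sum_{e : X=X} \prod_{g:X=X} g_*(e)=e\Big). \]

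The second step is to unwind the transport condition $g_*(e) = e$ in the case $P(Z) = (Z=Z)$, i.e.\ where the family is (pointwise) a "loop space" family. Transport of a path $e : X=X$ along a path $g : X=X$ in this family is given by conjugation: $g_*(e) = g^{-1} \ct e \ct g$ (this is one of the standard transport computations from \cite[Chapter 2]{hottbook}, for the family $Z \mapsto (Z=Z)$). So the condition $g_*(e)=e$ becomes $g^{-1} \ct e \ct g = e$, which rearranges to $e \ct g = g \ct e$, an equation of paths in $X=X$. Thus the right-hand side above is equivalent to $\sum_{e:X=X}\prod_{g:X=X}(e \ct g = g \ct e)$.

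The third step is to transfer along the univalence equivalence $(X=X) \simeq (X\simeq X)$. Univalence turns the type $X=X$ into $X\simeq X$, path composition $\ct$ into composition $\circ$ of equivalences (in the appropriate order), and the equation $e\ct g = g\ct e$ into $f\circ g = g\circ f$ for the corresponding equivalences $f,g$. Since $\Sigma$ and $\Pi$ respect equivalences in their arguments, this yields the desired $\sum_{f:X\simeq X}\prod_{g:X\simeq X}(f\circ g = g\circ f)$. One must take a little care that the variance of composition matches up (transporting $e$ along $g$ involves $g$ and $g^{-1}$ on opposite sides), but since we ultimately land on a symmetric commutativity statement $f\circ g = g\circ f$, the potential discrepancy between $g\circ f$ and $f\circ g$ washes out.

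The main obstacle is the bookkeeping in the second step: correctly identifying the transport function for the loop-space family $Z\mapsto(Z=Z)$ as conjugation, and being careful about the order of composition and the placement of inverses, so that the final rearrangement genuinely produces the commutativity condition rather than something that only superficially resembles it. Everything else is a routine chain of equivalences assembled from \cref{thm:baut-ind-hset}, univalence, and the identity-type lemmas of \cite[Chapter 2]{hottbook}; as elsewhere in the paper, the fully checked details live in the Coq formalization.
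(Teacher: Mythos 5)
Your proof is correct and follows essentially the same route as the paper's: apply \cref{thm:baut-ind-hset} to the family $Z\mapsto(Z=Z)$, identify the fixed-point condition $g_*(e)=e$ with the commutativity condition $e\ct g = g\ct e$ on loops, and transfer across univalence. The only cosmetic difference is that you spell out the transport-is-conjugation computation and rearrange by hand, whereas the paper cites \cite[Theorem 2.11.5]{hottbook}, which packages precisely the equivalence $(g_*(e)=e)\simeq(e\ct g = g\ct e)$.
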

\begin{proof}
  Since $X$ is a set, $Z=Z$ is a set for any $Z:\baut(X)$.
  Thus, by \cref{thm:baut-ind-hset}, $\prod_{Z:\baut(X)} (Z=Z)$ is equivalent to
  \[ \textstyle \sum_{e:X=X} \prod_{g:X= X} (g_*(e) = e). \]
  The result follows by applying~\cite[Theorem 2.11.5]{hottbook} and the univalence axiom.
\end{proof}

\begin{rmk}\label{thm:center-bautbool}
  If $X\defeq\btwo$, it is easy to show that $X$ has precisely two automorphisms, the identity and the flip.
  Since the flip is an involution, it commutes with itself, and of course it commutes with the identity; thus by \cref{thm:center-baut} it yields a nontrivial element of $\prod_{Z:\baut(\btwo)} (Z=Z)$.
  This gives a slightly different proof of~\cite[Theorem 4.1.3]{hottbook}.
  In fact, \cref{thm:center-baut} gives the stronger result that $\prod_{Z:\baut(\btwo)} (Z=Z)$ has \emph{exactly} one nontrivial element (hence in particular our nontrivial element agrees with that of~\cite[Theorem 4.1.3]{hottbook}).
\end{rmk}

\cref{thm:center-baut} says that $\prod_{Z:\baut(X)} (Z=Z)$ is equivalent to the type of automorphisms of $X$ that commute with all other automorphisms of $X$, i.e. the \textbf{center} of $\aut(X)$.
This explains why when we move up a level to the type appearing in \cref{thm:2center-bautbautbool}, we may reasonably call it the \textbf{2-center}.

\begin{lemma}\label{thm:center2-baut}
  If $X$ is a 1-type, then $\prod_{Z:\baut(X)} (\refl_Z = \refl_Z)$ is equivalent to
  \[ \textstyle \sum_{f:\prod_{x:X} (x=x)} \prod_{g:X\simeq X} \prod_{x:X} (\ap_g(f(x)) = f(g(x))). \]
\end{lemma}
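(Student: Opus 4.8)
The plan is to imitate the proof of \cref{thm:center-baut} one level higher, again reducing to \cref{thm:baut-ind-hset} and then identifying the resulting ``fixed-point'' condition. First I would replace the family $Z\mapsto(\refl_Z =_{Z=Z}\refl_Z)$ by the more convenient family $Z\mapsto\prod_{z:Z}(z=z)$. For any type $Z$ there is a chain of equivalences
\[ (\refl_Z =_{Z=Z}\refl_Z) \;\simeq\; (\id_Z =_{Z\simeq Z}\id_Z) \;\simeq\; (\id_Z =_{Z\to Z}\id_Z) \;\simeq\; (\id_Z\sim\id_Z) \;\jdeq\; \textstyle\prod_{z:Z}(z=z), \]
using univalence (noting that $\mathsf{idtoeqv}$ carries $\refl_Z$ to $\id_Z$), the fact that $\mathsf{isequiv}$ is a mere proposition, and function extensionality. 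Since a pointwise equivalence of families induces an equivalence of dependent products, we get $\prod_{Z:\baut(X)}(\refl_Z=\refl_Z)\simeq\prod_{Z:\baut(X)}\prod_{z:Z}(z=z)$, and no naturality needs to be checked.

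Because $X$ is a $1$-type, so is every $Z:\baut(X)$ (as ``being a $1$-type'' is a mere proposition and we have $\brck{Z=X}$), hence each $z=z$ is a set and so is $P(Z)\defeq\prod_{z:Z}(z=z)$. Thus \cref{thm:baut-ind-hset} applies and gives
\[ \textstyle\prod_{Z:\baut(X)}\prod_{z:Z}(z=z) \;\simeq\; \sum_{e:\prod_{x:X}(x=x)}\;\prod_{g:X=X}\big(g_*(e)=e\big), \]
where $g_*$ is transport along $g$ in the family $Z\mapsto\prod_{z:Z}(z=z)$. Univalence then lets me replace $\prod_{g:X=X}$ by $\prod_{g:X\simeq X}$, turning transport along a path into the action of the corresponding equivalence.

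What remains is to show, for fixed $e:\prod_{x:X}(x=x)$ and $g:X\simeq X$, that $g_*(e)=e$ is equivalent to $\prod_{x:X}(\ap_g(e(x))=e(g(x)))$. Unwinding transport in a dependent product whose indexing type varies, together with transport in the family $z\mapsto(z=z)$, one computes that $g_*(e)(x)$ is $\ap_g\bigl(e(g^{-1}(x))\bigr)$ conjugated by the path $g(g^{-1}(x))=x$ coming from $g$; so by function extensionality $g_*(e)=e$ is equivalent to asking that this conjugated path equal $e(x)$ for every $x$. Reparametrizing $x\defeq g(y)$ and using the half-adjoint coherence of $g$ to collapse $g^{-1}(g(y))=y$, the conjugating path becomes trivial and the condition reduces to $\ap_g(e(y))=e(g(y))$, as required. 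The only delicate part is the bookkeeping with the naturality squares from \cref{sec:notation} and the triangle identity of the equivalence $g$; as with the computation underlying \cref{thm:center-baut}, these details are routine but tedious, and are carried out in the Coq formalization.
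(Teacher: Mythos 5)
Your proof is correct and follows essentially the same route as the paper: apply \cref{thm:baut-ind-hset}, convert $\refl_X=\refl_X$ to $\prod_{x:X}(x=x)$ and $X=X$ to $X\simeq X$, compute the transport as conjugated $\ap_g$, and reparametrize. The only cosmetic difference is that you convert the family $Z\mapsto(\refl_Z=\refl_Z)$ to $Z\mapsto\prod_{z:Z}(z=z)$ pointwise \emph{before} invoking \cref{thm:baut-ind-hset} (correctly noting that no naturality check is needed for an equivalence of families over a fixed index), whereas the paper applies the lemma first and performs the conversion only at $Z\defeq X$ afterward; your version makes the subsequent transport computation in the $\Pi$-type family a bit more transparent, and you are more explicit than the paper about the conjugating path that appears and how the half-adjoint coherence eliminates it.
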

\begin{proof}
  Since $X$ is a 1-type, $(\refl_Z=\refl_Z)$ is a set for any $Z:\baut(X)$.
  Thus, by \cref{thm:baut-ind-hset}, $\prod_{Z:\baut(X)} (\refl_Z = \refl_Z)$ is equivalent to
  \[\textstyle \sum_{e:\refl_X = \refl_X} \prod_{g:X=X} g_*(e)=e. \]
  Now by univalence and function extensionality, $\refl_X = \refl_X$ is equivalent to $\prod_{x:X} (x=x)$, while of course $X=X$ is equivalent to $X\simeq X$.
  Under this equivalence, $g_*(e)$ is identified with $\lambda x.\ap_g(f(g^{-1}(x)))$.
  Finally, since $g$ is an equivalence, we can transfer it to the other side of the equation and obtain the desired result.
\end{proof}

We want to apply \cref{thm:center2-baut} to $X\defeq \baut(\btwo)$.
In that case, we have a nontrivial ${f:\prod_{x:\baut(\btwo)} (x=x)}$ from \cref{thm:center-bautbool}.
Therefore, to prove \cref{thm:2center-bautbautbool} it remains to show that this $f$ satisfies $\ap_g(f(Z)) = f(g(Z))$ for all automorphisms $g$ of $\baut(\btwo)$ and all $Z:\baut(\btwo)$.

Of course, this requires knowing something about \emph{all} automorphisms of $\baut(\btwo)$.
In our proof sketch of \cref{thm:2center-bautbautbool}, we claimed that the space of automorphisms of $\baut(\btwo)$ should be equivalent to $\baut(\btwo)$ itself, but our argument involved decomposing an \oo-groupoid into ``objects, morphisms, and 2-morphisms'' which is not possible in homotopy type theory.
Instead, we need to give a more ``synthetic'' argument, analogous to our construction of the incoherent pre-idempotent on $\baut(C)$ in \cref{sec:some-dont}.

The idea is as follows: since $\btwo$ is an abelian group (the cyclic group of order 2), $\baut(\btwo)$ should also be an abelian \oo-group.
% (or 2-group, since it is a 1-type).
Since multiplication by a fixed element of an \oo-group is an equivalence, this will give us a map $\baut(\btwo) \to (\baut(\btwo) \simeq \baut(\btwo))$, which we can then show to be an equivalence.

Now we have to define the group operation on $\baut(\btwo)$ internally.
The idea to keep in mind is that the elements of $\baut(\btwo)$ are the ``finite sets with two elements''.
They are merely isomorphic to $\btwo$, but to \emph{specify} such an isomorphism $\btwo\simeq Z$ is the same as specifying an element of $Z$ (to be the image of $1:\btwo$).

The ``morally-best'' definition of the group operation would perhaps be as a ``tensor product over the field with two elements''.
However, since we are not assuming any colimits, we use instead the following:
\[Z \ast W \defeq (Z\simeq W). \]
Since $\btwo\simeq (\btwo\simeq\btwo)$, it follows that $Z\simeq W$ is in $\baut(\btwo)$ if $Z$ and $W$ are.

This definition is obviously symmetric, $Z\ast W = W\ast Z$.
Moreover, it has $\btwo$ itself as a left (hence also right) identity: if $W:\baut(\btwo)$ then an equivalence $e:\btwo \simeq W$ is uniquely determined by $e(1):W$.
And $Z\ast Z$ is equivalent to $\btwo$ for any $Z$, since it has a canonically specified element (namely the identity); thus in particular $\ast$ has inverses.
The trickiest part is showing associativity.

\begin{lemma}
  For any $Z,W,Y:\baut(\btwo)$ we have $(Z\ast W)\ast Y = Z \ast (W\ast Y)$.
\end{lemma}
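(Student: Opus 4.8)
The plan is to reduce the claimed equality to the construction of a bare type equivalence, and then to build that equivalence from two ``rigidity'' facts about two-element types. Unfolding the definition of $\ast$, we have $(Z\ast W)\ast Y\jdeq((Z\simeq W)\simeq Y)$ and $Z\ast(W\ast Y)\jdeq(Z\simeq(W\simeq Y))$, and by the remark following the definition of $\ast$ both of these are again elements of $\baut(\btwo)$. Since an equality of elements of $\baut(\btwo)$ is determined by an equality of underlying types (its second, propositional-truncation, component being a mere proposition), and the latter is by univalence the same as an equivalence of underlying types, it suffices to exhibit an equivalence
\[ ((Z\simeq W)\simeq Y)\;\simeq\;(Z\simeq(W\simeq Y)). \]
No naturality in $Z,W,Y$ is needed for this statement, only the bare equivalence.

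The construction rests on two facts, each asserting that an explicitly given map is an equivalence. Because $\mathsf{isequiv}$ is a mere proposition, each may be proved by applying the induction principle of propositional truncation to the ``merely $=\btwo$'' data carried by each of the two-element types involved, reducing to the case in which every type in sight is literally $\btwo$ --- whereupon the claim is a finite verification about the group $\aut(\btwo)=\{\id,\sigma\}$ ($\sigma$ the flip). The facts are: \emph{(A)} for two-element types $Z,W$, the map $Z\to((Z\simeq W)\simeq W)$ sending $z$ to the evaluation $e\mapsto e(z)$ is an equivalence (reducing to $\btwo$: the two evaluation maps $\aut(\btwo)\to\btwo$ are precisely the two bijections $\aut(\btwo)\simeq\btwo$); and \emph{(B)} for two-element types $A,W,Y$, the map $\Phi:(A\simeq Y)\to((A\simeq W)\simeq(W\simeq Y))$ given by $\Phi(k)\defeq(h\mapsto k\circ h^{-1})$ is an equivalence. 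Here $\Phi$ is well defined because, for fixed $k$, the assignment $h\mapsto k\circ h^{-1}$ is the composite of inversion $(A\simeq W)\simeq(W\simeq A)$ with post-composition by $k$, hence an equivalence $(A\simeq W)\simeq(W\simeq Y)$; and $\Phi$ itself is an equivalence by the same reduction to $\btwo$, where one checks $\Phi(\id)=\id$ and $\Phi(\sigma)$ is the flip of $\aut(\btwo)$.

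Granting (A) and (B), the desired equivalence is the composite
\[ Z\simeq(W\simeq Y)\;\simeq\;\bigl(((Z\simeq W)\simeq W)\simeq(W\simeq Y)\bigr)\;\simeq\;\bigl((Z\simeq W)\simeq Y\bigr), \]
whose first step reindexes the leftmost slot along the equivalence of (A) --- using that pre-composition with an equivalence $Z'\simeq Z$ induces an equivalence $(Z\simeq U)\simeq(Z'\simeq U)$ --- and whose second step is $\Phi^{-1}$ from (B) with $A\defeq Z\simeq W$. Note that although (A) and (B) are proved by ``WLOG everything is $\btwo$'', the equivalence itself is honest data: it is this explicit composite of explicit maps, whose being-equivalences (propositions) are what the truncation reductions discharge. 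Transporting the composite across univalence, and supplying the automatic propositional truncation components, gives the equality in $\baut(\btwo)$.

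Conceptually this is nothing but the associativity of $\mathbb{Z}/2$: once equipped with its canonical nonidentity self-equivalence a two-element type is a $\mathbb{Z}/2$-torsor, $\ast$ is the ``difference'' of torsors, and the finite checks buried in (A) and (B) are exactly the arithmetic identities $2y=0$ and $(x-w)-y=x-(w-y)$ in $\mathbb{Z}/2$. Accordingly I expect the main obstacle to be purely organizational --- keeping track of the chain of equivalences, of which composites of equivalences are themselves equivalences, and of the truncation reductions down to the concrete computations in $\aut(\btwo)$ --- which is presumably why the author flags it as merely ``the trickiest part''.
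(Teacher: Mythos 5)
Your proof is correct, but it takes a genuinely different route from the paper's. The paper first uses the symmetry of $\ast$ to reduce the claim to $Y\ast(Z\ast W) = Z\ast(Y\ast W)$, and then observes that an element of $Y\ast(Z\ast W) \jdeq (Y\simeq(Z\simeq W))$ can be regarded as a curried function $e:Y\to(Z\to W)$ subject to two side conditions, each a mere proposition; the associator is then simply the swap of the two outer arguments, $\sigma(e)(z)(y)\defeq e(y)(z)$, which is manifestly its own inverse once $Y$ and $Z$ are switched, so that no inverse has to be constructed and no chain of equivalences assembled --- only the preservation of the two side conditions has to be checked, which the paper does by injectivity/finiteness arguments about two-element sets. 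Your proof instead builds the equivalence $((Z\simeq W)\simeq Y)\simeq(Z\simeq(W\simeq Y))$ directly as a composite, using your two helper facts (A) (the double-dual evaluation $Z\to((Z\simeq W)\simeq W)$ is an equivalence) and (B) (the map $k\mapsto(h\mapsto k\circ h^{-1})$ from $A\simeq Y$ to $(A\simeq W)\simeq(W\simeq Y)$ is an equivalence), each certified by reduction to the basepoint $\btwo$ via connectedness of $\baut(\btwo)$ and the fact that $\mathsf{isequiv}$ is a mere proposition. Both proofs ultimately rest on the same rigidity of two-element sets discharged by that truncation reduction; the paper's swap trick is shorter because self-inverseness is free and there is nothing to orient, whereas your version is more modular and makes the $\mathbb{Z}/2$-torsor arithmetic visible.
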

\begin{proof}
  Since $\ast$ is symmetric, it suffices to prove $Y\ast (Z\ast W) = Z\ast (Y\ast W)$.
  We will show that for all $Y,Z,W$ there is a map $\sigma : Y\ast (Z\ast W) \to Z\ast (Y\ast W)$, and that this map is its own inverse (when applied with $Y$ and $Z$ switched).

  Now, an element of $Y\ast (Z\ast W)$ can be regarded as a function $e:Y \to (Z\to W)$ with the additional properties that
  \begin{enumerate}
  \item each function $e(y):Z \to W$ is an equivalence, and\label{item:ast1}
  \item $e$ induces an equivalence from $Y$ to $Z\simeq W$.\label{item:ast2}
  \end{enumerate}
  Since being an equivalence is a mere proposition, two elements of $Y\ast (Z\ast W)$ are equal just when their underlying functions $e:Y \to (Z\to W)$ are.

  We will define $\sigma$ so that its action on underlying functions simply swaps arguments: $\sigma(e)(z)(y) = e(y)(z)$.
  Thus, it will automatically be self-inverse.
  What remains is to show that $\sigma(e)$ satisfies~(\ref{item:ast1}) and~(\ref{item:ast2}) assuming $e$ does.

  However, since all of our types are finite sets (that is, they are merely isomorphic to a standard finite type such as $\sum_{k:\mathbb{N}}(k<n)$), a map between them is an equivalence as soon as it is injective.
  Thus, to show~(\ref{item:ast1}) for $\sigma(e)$ we must show that if $e(y)(z) = e(y')(z)$ for some $z:Z$, then $y=y'$.
  But by~(\ref{item:ast2}) for $e$, we have $y = e^{-1}(e(y))$ and $y' = e^{-1}(e(y'))$, so it suffices to show that $e(y)=e(y')$.
  This follows from $e(y)(z) = e(y')(z)$ since an equivalence between 2-element sets is determined by its action on a single element.

  Similarly, to show~(\ref{item:ast2}) for $\sigma(e)$, we must show that if $e(y)(z) = e(y)(z')$ for \emph{all} $y:Y$, then $z=z'$.
  But this in particular implies that $e(y)(z) = e(y)(z')$ for \emph{some} $y$, and thus $z=z'$ by~(\ref{item:ast1}) for $e$.
\end{proof}

Now we can prove that $\baut(\btwo)$ is equivalent to its own automorphism group.

\begin{lemma}\label{thm:aut-bautbool}
  $\baut(\btwo) \simeq (\baut(\btwo)\simeq \baut(\btwo))$.
\end{lemma}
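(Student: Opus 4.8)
The plan is to show that the left-multiplication map
\[ \mu : \baut(\btwo) \to (\baut(\btwo)\simeq\baut(\btwo)), \qquad \mu(Z) \defeq \bigl(W\mapsto Z\ast W\bigr), \]
is an equivalence, with a homotopy inverse given by evaluation at the unit $\btwo$. First I would check that each $\mu(Z)$ genuinely is an equivalence: by associativity of $\ast$ we get $\mu(Z)(\mu(Z)(W)) = (Z\ast Z)\ast W$, and since $Z\ast Z \jdeq (Z\simeq Z)$ carries the canonical element $\id_Z$ — and a two-element set equipped with a point is canonically $\btwo$, because an equivalence out of $\btwo$ is pinned down by the image of $1$ — we have $Z\ast Z = \btwo$, whence $(Z\ast Z)\ast W = \btwo\ast W = W$ by the unit law; so $\mu(Z)\circ\mu(Z)\sim\id$, i.e.\ $\mu(Z)$ is its own inverse. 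Next, writing $\mathrm{ev}(\phi)\defeq\phi(\btwo)$, symmetry of $\ast$ and the unit law give $\mathrm{ev}(\mu(Z)) = Z\ast\btwo = Z$, so $\mathrm{ev}\circ\mu\sim\id$.

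The substantive part is to prove $\mu\circ\mathrm{ev}\sim\id$, i.e.\ that an arbitrary self-equivalence $\phi$ of $\baut(\btwo)$ satisfies $\phi(\btwo)\ast W = \phi(W)$, naturally in $W:\baut(\btwo)$. Since $\baut(\btwo)$ is a $1$-type, the family $W\mapsto\bigl(\phi(\btwo)\ast W = \phi(W)\bigr)$ is a family of sets, so \cref{thm:baut-ind-hset} reduces the claim to exhibiting a single element of $\phi(\btwo)\ast\btwo = \phi(\btwo)$ — I would take the unit-law path $\upsilon$ — that is invariant under transport along every $g:\btwo=\btwo$. Unwinding transport in this family, invariance of $\upsilon$ is exactly the equation $\upsilon^{-1}\ct\ap_{\mu(\phi(\btwo))}(g)\ct\upsilon = \ap_\phi(g)$ in the loop space of $\baut(\btwo)$ at $\phi(\btwo)$. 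Both of the loop spaces that occur here are, at any basepoint, the two-element group $\btwo\simeq\btwo$ of \cref{thm:center-bautbool}, so each has a unique nonidentity element; for $g\jdeq\refl$ both sides are $\refl$, while for the nontrivial $g$ both $\ap_{\mu(\phi(\btwo))}(g)$ and $\ap_\phi(g)$ are nonidentity (because $\mu(\phi(\btwo))$ and $\phi$ are equivalences, hence injective on loops and fixing $\refl$) and conjugation by $\upsilon$ is a pointed equivalence between these two-element loop spaces, so both sides equal the nonidentity loop and the equation holds. Function extensionality then turns the resulting pointwise homotopy into an equality of underlying maps, and since $\mathsf{isequiv}$ is a mere proposition this lifts to an equality of equivalences; so $\mu\circ\mathrm{ev}\sim\id$ and $\mu$ is an equivalence.

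I expect the naturality computation of the second paragraph to be the main obstacle. Morally it is nothing more than the fact that a two-element set has exactly two automorphisms, but to deploy that fact one must track carefully how transport in the dependent family $W\mapsto(\phi(\btwo)\ast W = \phi(W))$ interacts both with $\ap_\phi$ and with the operation $\ast$ on loops, so as to recognize every loop that appears as ``the'' nonidentity loop; keeping these identifications coherent rather than merely pointwise is where the real bookkeeping lies. An alternative route would avoid proving $\mu\circ\mathrm{ev}\sim\id$ directly and instead show that $\mathrm{ev}$ is an embedding — equivalently, that the type of pointed self-equivalences of $\baut(\btwo)$ is contractible — and then conclude from ``a surjective embedding is an equivalence''; but that variant reintroduces essentially the same two-element-set analysis and, via the connected $1$-type/fundamental group correspondence, the triviality of $\mathsf{Aut}(\btwo\simeq\btwo)$.
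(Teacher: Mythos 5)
Your overall plan coincides with the paper's: left-multiplication $\mu(Z)\defeq\lambda W.\,Z\ast W$ is the candidate equivalence, each $\mu(Z)$ is its own inverse because $Z\ast Z=\btwo$ (canonical point $\id_Z$), and the easy round-trip follows from symmetry and the unit law. Your proof of the hard round-trip $\mu\circ\mathrm{ev}\sim\id$ is, as far as I can see, correct, but it takes a genuinely different and considerably heavier route than the paper's. You invoke \cref{thm:baut-ind-hset} to reduce to a single transport-invariance condition for the unit-law path $\upsilon$, then unwind transport in a family of identity types and settle the resulting equation by a pointwise case analysis on the two-element loop groups $\Omega(\baut(\btwo),-)$, using that $\ap$ of an equivalence is a pointed bijection and that conjugation preserves the nonidentity element. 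That all works, and you correctly flag the main cost: tracking that every loop in sight really is ``the'' nonidentity loop of a two-element group, coherently enough to assemble the pointwise equalities.

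The paper avoids all of this with a short algebraic trick that is worth knowing. It takes the inverse map to be $e\mapsto e^{-1}(\btwo)$ (rather than your $e\mapsto e(\btwo)$ --- either is fine, since inverses of equivalences are unique, but the paper's choice makes the computation land cleanly), and then proves the stronger identity
\[ e^{-1}(Z)\ast W \;=\; Z\ast e(W) \]
for all $Z,W$ and any self-equivalence $e$, from which the round-trip follows by setting $Z\defeq\btwo$. The point is that by the definition $Z\ast W\defeq(Z\simeq W)$ and univalence, $Z\ast W$ \emph{is} the identity type $Z=W$ in the universe, so the displayed identity is just the adjunction-style equivalence $(e^{-1}(Z)=W)\simeq(Z=e(W))$ that holds for any equivalence $e$, transported back to an equality by univalence. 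No induction over $\baut$, no analysis of loop groups, no case split on $g:\btwo=\btwo$. Your approach buys nothing over this except self-containedness; the paper's buys a one-line proof by exploiting that $\ast$ was \emph{defined} to be a hom-type. If you want to keep your $\mathrm{ev}(\phi)\defeq\phi(\btwo)$, you can still use the paper's identity: set $Z\defeq\phi(\btwo)$ to get $W=\phi(\btwo)\ast\phi(W)$, then apply $\mu(\phi(\btwo))$ to both sides and use $\phi(\btwo)\ast\phi(\btwo)=\btwo$.

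One small wording issue: you write ``for $g\jdeq\refl$,'' but a case split on $g:\btwo=\btwo$ only yields a propositional equality $g=\refl$, not a judgmental one; you then have to transport the goal along that path before it computes. This is harmless but worth stating precisely, since it is exactly the kind of bookkeeping your approach forces you to confront.
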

\begin{proof}
  The map from left to right sends $Z$ to $\lambda W. Z\ast W$.
  Since $Z\ast(Z\ast W) = (Z\ast Z)\ast W = \btwo\ast W = W$, the function $\lambda W. Z\ast W$ is an equivalence whose inverse is itself.

  The map from right to left sends $e:\baut(\btwo)\simeq \baut(\btwo)$ to $e^{-1}(\btwo)$.
  The round-trip composite on the left is the identity since $\btwo$ is a unit for $\ast$.
  On the other side, we must show that for any $e:\baut(\btwo)\simeq \baut(\btwo)$ and $W$ we have $e(W) = e^{-1}(\btwo)\ast W$.

  In fact, we will show that $e^{-1}(Z)\ast W = Z\ast e(W)$ for any $Z,W$; the desired result then follows by taking $Z\defeq \btwo$.
  However, by univalence, we have $(e^{-1}(Z)\ast W) \;=\; (e^{-1}(Z) = W)$ and similarly on the other side, and $(e^{-1}(Z) = W) \simeq (Z= e(W))$ holds for any equivalence $e$.
\end{proof}

Finally, we can prove \cref{thm:2center-bautbautbool}.

\begin{thm}
  There is an element of $\prod_{Z:\baut(\baut(\btwo))} (\refl_Z = \refl_Z)$ that is not equal to $\lambda Z.\refl_{\refl_Z}$.
\end{thm}
\begin{proof}
  By \cref{thm:center-bautbool}, we have an ${f:\prod_{x:\baut(\btwo)} (x=x)}$ that is unequal to $\lambda x.\refl_x$.
  Thus, by \cref{thm:center2-baut}, it remains to show that this $f$ satisfies $\ap_g(f(Z)) = f(g(Z))$ for all automorphisms $g$ of $\baut(\btwo)$ and all $Z:\baut(\btwo)$.

  Let $g$ and $Z$ be given.
  By \cref{thm:aut-bautbool}, we may assume $g$ is of the form $\lambda Y. W\ast Y$ for some $W:\baut(\btwo)$.
  And since our goal is a mere proposition, we may assume that $Z$ and $W$ are both $\btwo$.

  Now since $g(Y) \jdeq \btwo\ast Y$ and $\btwo$ is a left unit for $\ast$, we have a homotopy $H:g \sim \id$.
  And by ``dependent $\ap$'' for $f$ (see~\cite[Lemma 2.3.4]{hottbook}) applied to $H_\btwo : \btwo\ast\btwo = \btwo$, we have
  \[ f(\btwo\ast\btwo) \ct H_\btwo = H_\btwo \ct f(\btwo). \]
  Since also $g(Z) \jdeq \btwo\ast\btwo$, what we have to show becomes
  \[ \ap_g(f(\btwo)) \ct H_\btwo = H_\btwo \ct f(\btwo).\]
  However, this is just naturality for $H$.
\end{proof}

\section{Coherent idempotents}
\label{sec:coherent-idempotents}

We have seen that, assuming univalence, $\Retr(X)$ is a retract of $\QIdem(X)$, and in general a nontrivial one.
As remarked in the introduction, in \oo-category theory the ``space of retractions of $X$'' is equivalent to the ``space of fully-coherent idempotents on $X$''.
This follows from~\cite[Corollary 4.4.5.14]{lurie:higher-topoi}.
As stated, that corollary says that in an \oo-category where (fully-coherent) idempotents split, the space of \emph{all} retractions is equivalent to the space of \emph{all} fully-coherent idempotents; but since this equivalence is fibered over the space of objects of the \oo-category itself, it induces fiberwise equivalences for each object $X$.

Thus, in homotopy type theory with the univalence axiom, it is reasonable to expect that $\Retr(X)$ should be equivalent to ``the type of fully-coherent idempotents on $X$'', if we were able to define the latter type.
In particular, since $\Retr(X)$ is not generally equivalent to $\QIdem(X)$, the latter is not a correct definition of the type of fully-coherent idempotents.

As mentioned in the introduction, we could take $\Retr(X)$ as a \emph{definition} of the type of fully-coherent idempotents, but this would suffer from two drawbacks:
\begin{enumerate}
\item It would be aesthetically unsatisfying to say that ``an idempotent'' comes by definition equipped with a splitting.
  Morally, splitting should be something that is \emph{done to} an idempotent.
\item It lives in a higher universe than the type $X$, since it involves a $\sum_{A:\type}$.
\end{enumerate}
Both of these problems can be solved with the following observation:
since $\Retr(X)$ is a retract of $\QIdem(X)$, the composite $\uli \circ \split$ \emph{is a quasi-idempotent} on $\QIdem(X)$.
We can therefore \emph{split it} using the construction of \cref{thm:qidem-splits}.
By \cref{thm:split-uniq}, the resulting type will be equivalent to $\Retr(X)$; but it will live (like $\QIdem(X)$ itself) in the same universe as $X$, and its elements do not obviously contain a splitting.
Thus, we propose the following definition.

\begin{defn}
  A \textbf{(fully-coherent) idempotent} on a type $X$ is an element of the splitting of $\uli \circ \split$.
  Somewhat more explicitly, this type is
  \[ \textstyle \Idem(X) \defeq \sum_{a:\bN \to \QIdem(X)} \prod_{n:\bN} (\uli(\split(a_{n+1})) = a_n). \]
  Similarly, an \textbf{idempotent structure on} $f:X\to X$ is an element of the splitting of the similarly induced idempotent on $\QIdem(X,f)$.
\end{defn}

It is worth thinking a little about what assumptions are necessary for this definition.
It may appear at first to require univalence, since $\uli \circ \split$ is only a (quasi-)idempotent because of \cref{thm:path-retr}, which uses univalence.
However, as observed in \cref{thm:split-assumptions}, to define the splitting \emph{type} of an idempotent does not require the witnesses of quasi-idempotency or pre-idempotency.
Thus, in order to define the type $\Idem(X)$ we really only require function extensionality, since that suffices to define the maps $\uli$ and $\split$.

It is possible, of course, to unwind this definition further, but it becomes quite complicated.
Nevertheless, it is satisfying that we can give \emph{some} correct definition of fully-coherent idempotent, since the general problem of representing fully-coherent higher homotopy structures in type theory is unsolved.

There is an interesting analogy to the situation with equivalences.
The na\"\i{}ve definition of an equivalence (or isomorphism) between types $A$ and $B$ would be
\begin{equation}
  \textstyle \sum_{f:A\to B} \sum_{g:B\to A} (g\circ f \sim \id_A) \times (f\circ g \sim \id_B).\label{eq:qinv}
\end{equation}
However, this gives the wrong homotopy type.
We might then think that we need an infinite tower of further coherences, but in fact it suffices to give one additional datum, although there are several choices for what that extra datum might be (see~\cite[Chapter 4]{hottbook}).

Nevertheless, given an element of~\eqref{eq:qinv}, it is possible to alter one of its constituent homotopies to obtain a fully-coherent equivalence.
This exhibits the type of equivalences as a retract of~\eqref{eq:qinv}, just as our type of idempotents is a retract of the type of quasi-idempotents.
There is a difference, however, in that ``$f$ is an equivalence'' is a mere proposition, whereas ``$f$ is an idempotent'' is not.

\section{Conclusions}
\label{sec:conclusions}

The main result of this paper is that not all idempotents in Martin-L\"{o}f type theory can be proven to split, but if we assume function extensionality then one additional coherence condition suffices to make an idempotent splittable.
In addition to its intrinsic interest, this shows how ideas from homotopy theory and higher category theory can be useful even for the study of non-homotopical type theory.

In the homotopical case, however, there is more to say about idempotents, which can be partially or fully coherent.
Although fully coherent homotopical structures are often difficult to define in type theory, we have managed to define the type of fully coherent idempotents, by splitting an idempotent on the type of partially coherent ones.

With that said, this paper still leaves a number of interesting open questions about idempotents in type theory.

\begin{oprob}
  Can we split quasi-idempotents in \mltt\ without assuming function extensionality?
  In particular, is there any more ``finite'' way to construct such a splitting?
\end{oprob}

\begin{oprob}
  Is the section $\Idem(X) \to \QIdem(X)$ an embedding?
  Equivalently, by \cref{thm:split-splitsupp}, does the type $\uli(\split(f,I,J)) = (f,I,J)$ admit a weakly constant endofunction for every quasi-idempotent $(f,I,J)$?
  I expect the answer is no, but an explicit counterexample would be nice to have.
\end{oprob}

\begin{oprob}
  Similarly, is the induced map from $\Idem(X)$ to the type $\PIdem(X)$ of \emph{pre}-idempotents an embedding?
  Again, I expect the answer is no, but this appears to be an open problem even in \oo-category theory; see~\cite{shulman:splhoidem}.
\end{oprob}

\begin{oprob}
  Can $\Idem(X)$ be defined without assuming even function extensionality?
  More precisely, is there a type we can define without function extensionality that becomes equivalent to $\Idem(X)$ if we assume function extensionality?
\end{oprob}

\begin{oprob}
  Are there any other fully-coherent higher-homotopy structures that can be obtained from a finite amount of coherence by splitting an idempotent?
\end{oprob}

\section*{Acknowledgement}

This paper would not exist without Mart\'in Escard\'o: not just because he asked the original question and contributed many of the results in \cref{sec:some-split}, but because during a long email discussion he provided both encouragement and an indispensable sounding-board for the development of the rest of it, and gave helpful feedback on a draft.

\bibliographystyle{alpha}
\bibliography{all,mo}

\begin{thebibliography}{{HoT}15}

\bibitem[APW13]{apw:vvu-hott}
Steve Awodey, {\'A}lvaro Pelayo, and Michael~A. Warren.
\newblock Voevodsky's univalence axiom in homotopy type theory.
\newblock {\em Notices Amer. Math. Soc.}, 60(9):1164--1167, 2013.

\bibitem[Awo12]{awodey:tt-and-htpy}
Steve Awodey.
\newblock Type theory and homotopy.
\newblock In {\em Epistemology versus ontology}, volume~27 of {\em Log.
  Epistemol. Unity Sci.}, pages 183--201. Springer, Dordrecht, 2012.

\bibitem[{HoT}15]{hottcoq}
{HoTT Project}.
\newblock The homotopy type theory {Coq} library.
\newblock \url{http://github.com/HoTT/HoTT/}, 2015.

\bibitem[HS98]{hs:gpd-typethy}
Martin Hofmann and Thomas Streicher.
\newblock The groupoid interpretation of type theory.
\newblock In {\em Twenty-five years of constructive type theory ({V}enice,
  1995)}, volume~36 of {\em Oxford Logic Guides}, pages 83--111. Oxford Univ.
  Press, New York, 1998.

\bibitem[Joh02]{ptj:elephant2}
Peter~T. Johnstone.
\newblock {\em Sketches of an Elephant: A Topos Theory Compendium: Volume 2}.
\newblock Number~43 in Oxford Logic Guides. Oxford Science Publications, 2002.

\bibitem[KECA14]{keca:anon}
Nicolai Kraus, Mart\'{i}n Escard\'{o}, Thierry Coquand, and Thorsten
  Altenkirch.
\newblock Notions of anonymous existence in {Martin--L\"{o}f} type theory.
\newblock \url{http://www.cs.nott.ac.uk/~psztxa/publ/jhedberg.pdf}, 2014.

\bibitem[KL12]{klv:ssetmodel}
Chris Kapulkin and Peter~LeFanu Lumsdaine.
\newblock The simplicial model of univalent foundations (after {Voevodsky}).
\newblock arXiv:1211.2851, 2012.

\bibitem[Kra15]{kraus:thesis}
Nicolai Kraus.
\newblock {\em Truncation levels in homotopy type theory}.
\newblock PhD thesis, University of Nottingham, 2015.

\bibitem[LF14]{lf:emspaces}
Dan Licata and Eric Finster.
\newblock {Eilenberg--MacLane} spaces in homotopy type theory.
\newblock {\em LICS}, 2014.
\newblock \url{http://dlicata.web.wesleyan.edu/pubs/lf14em/lf14em.pdf}.

\bibitem[Lur09]{lurie:higher-topoi}
Jacob Lurie.
\newblock {\em Higher topos theory}.
\newblock Number 170 in Annals of Mathematics Studies. Princeton University
  Press, 2009.

\bibitem[Lur14]{lurie:ha}
Jacob Lurie.
\newblock Higher algebra.
\newblock Available at \url{http://www.math.harvard.edu/~lurie/}, September
  2014.

\bibitem[PW14]{pw:hottvvuf}
\'{A}lvaro Pelayo and Michael~A. Warren.
\newblock Homotopy type theory and {Voevodsky's} univalent foundations.
\newblock {\em Bull. Amer. Math. Soc. (N.S.)}, 51(4):597--648, 2014.

\bibitem[Shu14]{shulman:splhoidem}
Michael Shulman.
\newblock Non-unique splittings of homotopy idempotents.
\newblock MathOverflow question at
  \url{http://mathoverflow.net/questions/189412/non-unique-splittings-of-homotopy-idempotents},
  December 2014.

\bibitem[{Uni}13]{hottbook}
{Univalent Foundations Program}.
\newblock {\em Homotopy Type Theory: Univalent Foundations of Mathematics}.
\newblock \url{http://homotopytypetheory.org/book/}, first edition, 2013.

\end{thebibliography}

\end{document}